\documentclass[11pt]{article}
\usepackage{mathrsfs}
\usepackage{amssymb}
\usepackage{amsmath,amsthm,amssymb,mathrsfs,amsbsy,bm}
\usepackage{graphicx}
\usepackage{epsfig}
\usepackage{enumerate}
\usepackage{color, xcolor} 

\usepackage[colorlinks=true, allcolors=blue]{hyperref}


\usepackage[letterpaper,top=2cm,bottom=2cm,left=3cm,right=3cm,marginparwidth=1.75cm]{geometry}


\numberwithin{equation}{section}

\newtheorem{theorem}{\bf{Theorem}}[section]
\newtheorem{lemma}{\bf {Lemma}}[section]
\newtheorem{define}{\bf{Definition}}[section]
\newtheorem{corollary}{\bf{Corollary}}[section]

\newtheorem{remark}{\bf{Remark}}[section]

\newcommand{\mbE}{\widehat{\mathbb{E}}}
\newcommand{\mbe}{\widehat{\mathcal{E}}}
\newcommand{\V}{\mathbb{V}}
\newcommand{\mv}{\mathcal{V}}
\newcommand{\sles}{(\Omega,\mathcal{H},\mbE)}

\newcommand*{\dif}{\mathop{}\!\mathrm{d}}

\newcommand{\bE}{\breve{\mathbb{E}}}
\newcommand{\be}{\breve{\mathcal{E}}}

\title{Strong laws of large numbers for sequences of blockwise $m$-dependent and orthogonal random variables under sublinear expectations}
\author{Jialiang Fu\\ \sl \small Academy of Mathematics and Systems Science, Chinese Academy of Sciences, \\ \sl \small Beijing, China\\ \sl \small E-mail: fujialiang@amss.ac.cn }
\date{}

\begin{document}

\maketitle

\begin{abstract}
\par In this paper, we establish some strong laws of large numbers (SLLN) for non-independent random variables under the framework of sublinear expectations. One of our main results is for blockwise  $m$-dependent random variables, and another is for orthogonal random variables. Both are the generalizations of SLLN for independent random variables in sublinear expectation spaces.   \\
\par \bf{Keywords:}\rm\quad Sublinear expectations; Law of large numbers; Blockwise $m$-dependent; Orthogonal.
\end{abstract}

\section{Introduction}
 \ \ \ \ \  Kolmogorov's strong law of large numbers is a footstone in classical probability theory, which means the sample mean converges to the population mean. This fundamental theorem is based on the assumption that the random variables are independent and identically distributed. However, due to the complexity of the real world, data can also be dependent and not identically distributed in the observation of data. So it is very necessary to extend the law of large numbers to the case of dependent random variables. 
 
 A kind of common dependence is $m$-dependence, which was proposed in Hoeffding and Robbins's paper \cite{HR48}  in 1948 to our knowledge. A sequence $\{X_n\}_{n\geq1}$ of random variables is called $m$-dependent if $\{X_1,...,X_r\}$ is independent of $\{X_s,X_{s+1},...\}$ provided $s-r>m$, where $m$ is a fixed nonnegative integer. It's clear that 0-dependence is independence. Moricz \cite{Mo87} introduced the definition of blockwise $m$-dependence and obtained a strong law of large numbers for the dyadic block $\{X_k:2^{p-1}<k\leq2^p\}, p\in \mathbb{N}^*$. Later, Gaposhkin \cite{Gap95} and Zhang \cite{Bo98} generalized the dyadic block to arbitrary blocks. In theoretical analysis and practice, it may be much easier to check the orthogonality than independence. So it is reasonable to consider the law of large numbers for orthogonal random variables. Moricz \cite{Mo87} also obtained a strong law of large numbers for blockwise quasi-orthogonal sequences of random variables.
 
 Sublinear expectations are introduced by Shige Peng, under whose framework the expectations are not linearly additive anymore. Under the framework of nonlinear expectations, limit theorems and stochastic analysis can also be established corresponding to classical linear expectations. In recent years, there have been so many papers about the strong law of large numbers under sublinear expectations. Most of them are based on the assumption of independence, but the results for dependent random variables are relatively few. In recent work, Zhang \cite{GZ24} obtained a strong law of large numbers for $m$-dependent and stationary random variables under sublinear expectations. In this paper, we shall establish strong laws of large numbers for blockwise $m$-dependent random variables and orthogonal random variables respectively.
 
 The structure of this paper is as follows. In section 2, we shall present some preliminaries for the sublinear expectations. In section 3, some inequalities and properties needed in our proof will be presented. We shall present our main theorems in section 4 and the detailed proofs will be presented in section 5.

\section{Preliminaries}
\ \ \ \ \ In this section, we shall present some basic notations and results of sublinear expectations under the framework of Peng and one can refer to \cite{Pen19} for more details. 
\par Let $(\Omega,\mathcal{F})$ be a given measurable space and let $\mathcal{H}$ be a linear space of real functions defined on  $(\Omega,\mathcal{F})$. As the space of the random variables, $\mathcal{H}$ satifies that if $X_1,\cdots, X_n\in\mathcal{H}$, then $\varphi(X_1,\cdots,X_n)\in\mathcal{H}$ for each $\varphi\in C_{l,Lip}(\mathbb{R}^n)$. $C_{l,Lip}(\mathbb{R}^n)$ denotes the linear space of local Lipschitz functions $\varphi$ satisfying
\begin{align*}
\vert \varphi(\bm{x})-\varphi(\bm{y})& \vert\leq C(1+\vert \bm{x}\vert^m+\vert\bm{y}\vert^m)\vert\bm{x}-\bm{y}\vert,\quad\forall\bm{x},\bm{y}\in\mathbb{R}^n,\\
 & \text{ for some } C>0, m\in\mathbb{N} \text{  depending on } \varphi.
 \end{align*}
 And $C_{b, Lip}(\mathbb{R}^n)$ denotes the space of bounded Lipschitz functions.
\begin{define}
A sublinear expectation $\mbE$ on $\mathcal{H}$ is a functional $\mbE:\mathcal{H}\rightarrow{\mathbb{R}}$ satisfying:
\begin{itemize}
\item[(i)] Monotonicity: $\mbE[X]\leq\mbE[Y]$ if $X\leq Y$;
\item[(ii)] Constant preserving: $\mbE[c]=c$ for $c\in$R;
\item[(iii)] Sub-additivity: for each $X,Y\in\mathcal{H}$, $\mbE[X+Y]\leq\mbE[X]+\mbE[Y]$ ;
\item[(iv)] Positive homogeneity: $\mbE[\lambda X]=\lambda\mbE[X]$ for $\lambda\geq0$.
\end{itemize}
 The triple $(\Omega,\mathcal{H},\mbE)$ is called a sublinear expectation space. Given a sublinear expectation $\mbE$, $\mbe$ of $\mbE$ is defined by
$$
\mbe[X]\triangleq-\mbE[-X],\quad\forall X\in\mathcal{H}.
$$
\end{define}
\par By the sub-additivity of $\mbE$, it can be checked that $\mbE[X-Y]\geq\mbE[X]-\mbE[Y]$ for all $X,Y\in\mathcal{H}$, $\mbe[X]\leq\mbE[X]$, $\enspace \mbE[X+c]=\mbE[X]+c$ for $c\in R$. The last one is called cash translatability. $\mbE[X]$ and $\mbe[X]$ are often called the upper-expectation and lower-expectation of $X$ respectively.

\begin{define}
Let $(\Omega_1,\mathcal{H}_1,\mbE_1)$ and $(\Omega_2,\mathcal{H}_2,\mbE_2)$ be two sublinear expectation spaces. And a n-dimensional random vector $\bm{X_1}$ in $(\Omega_1,\mathcal{H}_1,\mbE_1)$ is said to be identically distributed with another n-dimensional random vector $\bm{X_2}$ in $(\Omega_2,\mathcal{H}_2,\mbE_2)$, denoted by $\bm{X_1}\overset{d}{=}\bm{X_2}$, if
$$
	\mbE_1[\varphi(\bm{X}_1)]=\mbE_2[\varphi(\bm{X}_2)],\quad\forall\varphi\in C_{b,Lip}(\mathbb{R}^n).
$$
 A sequence $\{X_n;n\geq 1\}$ of random variables is said to be identically distributed if $X_i\overset{d}{=}X_1$ for each $i\geq 1$.

\end{define}
\begin{define}
	Let $(\Omega,\mathcal{H},\mbE)$ be a sublinear expectation space. A random vector $\bm{Y}=(Y_1,\cdots,Y_n)\in\mathcal{H}^n$ is said to be independent of another random vector $\bm{X}=(X_1,\cdots,X_m)\in\mathcal{H}^m$ under $\mbE$ if for each test function $\varphi\in C_{b,Lip}(\mathbb{R}^{m+n})$ we have $$\mbE[\varphi(\bm{X},\bm{Y})]=\mbE[\mbE[\varphi(\bm{x},\bm{Y})]\vert_{\bm{x}=\bm{X}}].$$
\end{define}
 It is important to observe that under the framework of sublinear expectation, $Y$ is independent of $X$ does not in general imply that $X$ is independent of $Y$, which is different from the classical linear expectation. One can check the Example 1.3.15 in \cite{Pen19} for details. A sequence of random variables $\{X_n;n\geq 1\}$ is said to be independent if $X_{i+1}$ is independent of $(X_1,\cdots,X_i)$ for each $i\geq 1$. It is easy to check that if $\{X_1,\cdots,X_n\}$ are independent, then $\mbE[\sum_{i=1}^nX_i]=\sum_{i=1}^n\mbE[X_i]$.
 \begin{define}
 	A sequence of random variables $\{X_n;n\geq 1\}$ in $(\Omega,\mathcal{H},\mbE) $ is said to be $m$-dependent if there exists an integer $m$ such that $(X_{n+m+1},\cdots,X_{n+j})$ is independent of $(X_1,\cdots,X_n)$ for every $n$ and every $j\geq m+1$. In particular,  $m=0$ means $\{X_n;n\geq 1\}$ is an independent sequence.
 \end{define}
 \begin{define}
    Let $\{n_i;i\geq1\}$ be a given strictly increasing sequence of natural numbers. A sequence of random variables $\{X_n;n\geq 1\}$ in $(\Omega,\mathcal{H},\mbE) $ is said to be blockwise $m$-dependent with respect to $\{n_i;i\geq1\}$ if $\{X_{n_i\leq n<n_{i+1}}\}$ is m-dependent, which means either $n_{i+1}-n_{i}\leq m+1$ or $n_{i+1}-n_{i}> m+1$ and $\{X_n;s\leq n<n_{i+1}\}$ is independent of $\{X_n;n_i\leq n \leq r\}$ if only $s-r>m$.
 \end{define}
 \begin{define}
 	A sequence of random variables $\{X_n;n\geq 1\}$ in $(\Omega,\mathcal{H},\mbE) $ is said to be orthogonal if $\mbE[X_iX_j]=0$ for $i\neq j$. It's said to be orthonormal if $\mbE[X_iX_j]=\delta_{ij}$ and $\delta_{ij}$ here is the Kronecker symbol. 
 \end{define}
 \begin{define}\label{de1}
 	A sequence of random variables $\{X_n;n\geq 1\}$ in $(\Omega,\mathcal{H},\mbE) $ is said to be quasi-orthogonal if there exists a nonnegative sequence $\{f(j):j=0,1,\cdots\}$ and $\sum_{j=0}^{\infty}f(j)<\infty$ such that $|\mbE[X_kX_l]|\leq\sqrt{\mbE[X^2_k]}\sqrt{\mbE[X^2_l]}\cdot f(|k-l|)$,$\forall k,l=1,2,\cdots$. In particular, if $f(0)=1$ and $f$ is zero on other values, then $\{X_n;n\geq 1\}$ is orthogonal.
 \end{define}
\par Next, we consider the capacities corresponding to the sublinear expectations. One can refer to \cite{Zha16a} for more details. 
\par Let $\mathcal{G}\subset\mathcal{F}$. A function $V:\mathcal{G}\rightarrow[0,1]$ is called a capacity if
$$
V(\emptyset)=0,\enspace V(\Omega)=1 \enspace and\enspace  V(A)\leq V(B)\enspace \forall A\subset B, A,B\in\mathcal{G}.
$$
It is called sub-additive if $V(A\cup B)\leq V(A)+V(B)$ for all $A,B\in\mathcal{G}$ with $A\cup B\in\mathcal{G}$.
\par Let $\sles$ be a sub-linear expectation space. We define a pair of capacities $(\hat{\V},\hat{\mv})$ as follows.
\begin{equation}
	\hat{\V}(A)\triangleq\inf\{\mbE[\xi]:I_A\leq\xi,\xi\in\mathcal{H}\}, \hat{\mv}(A)\triangleq1-\hat{\V}(A^c), \enspace\forall A\in\mathcal{F}.
\end{equation}
\par We call $\hat{\V}$ and $\hat{\mv}$ the upper and lower capacity respectively. The capacity $\hat{\V}$ has the property that
\begin{equation}
	\mbE[f]\leq\hat{\V}(A)\leq\mbE[g]\quad if\enspace f\leq I_A\leq g,f,g\in\mathcal{H}\enspace and\enspace A\in\mathcal{F},\label{eq1}
\end{equation}
and the second inequality plays a similar role to Markov inequality in classical linear expectation.
\par Next, we define the Choquet integrals $(C_{\hat{\V}},C_{\hat{\mv}})$ by
\begin{equation}
	C_V[X]\triangleq\int_0^{\infty}V(X\geq t)\dif t+\int_{-\infty}^0[V(X\geq t)-1]\dif t
\end{equation}with $V$ being replaced by $\hat{\V}$ and $\hat{\mv}$ respectively.
If $\V_1$ on the sublinear expectation space $(\Omega_1,\mathcal{H}_1,\mbE_1)$ and $\V_2$ on the sublinear expectation space $(\Omega_2,\mathcal{H}_2,\mbE_2)$ are two capacities that have the property \eqref{eq1}, then for any random variables $X_1\in\mathcal{H}_1$ and $X_2\in\mathcal{H}_2$ with $X_1\overset{d}{=}X_2$, we have
\begin{equation}
	\V_1(X_1\geq x+\epsilon)\leq \V_2(X_2\geq x)\leq\V_1(X_1\geq x-\epsilon)\quad for\enspace all\enspace\epsilon>0\enspace and \enspace x,
\end{equation}
and
\begin{equation}
	C_{\V_1}[X_1]=C_{\V_2}[X_2].
\end{equation}
\par Since $\hat{\V}$ may be not countably sub-additive so that the Borel-Cantelli lemma is not valid, we consider the outer capacity $\hat{\V}^*$ which defined in \cite{Zha16b} by
\begin{equation}
	\hat{\V}^*(A)\triangleq\inf\left\{\sum_{n=1}^{\infty}\hat{\V}(A_n):A\subset\bigcup_{n=1}^{\infty}A_n, A_n\in \mathcal{F}, n\geq1\right\},\hat{\mv}^*(A)\triangleq1-\hat{\V}^*(A^c),\quad A\in\mathcal{F}.
\end{equation}
As shown in Zhang \cite{Zha16b}, $\hat{\V}^*$ is countably sub-additive, $\hat{\V}^*(A)\leq\hat{\V}(A)$ and satisfies $\hat{\V}^*(A)\leq\mbE[g]$ whenenver $I_A\leq g\in\mathcal{H}$. Further, $\hat{\V}(A)$(resp. $\hat{\V}^*$) is the largest sub-additive(resp. countably sub-additive) capacity in sense that if $V$ is also a sub-additive(resp. countably sub-additive) capacity satisfying $V(A)\leq\mbE[g]$ whenenver $I_A\leq g\in\mathcal{H}$, then $V(A)\leq\hat{\V}(A)$(resp. $V(A)\leq\hat{\V}^*(A)$).
\par  Throughout the whole paper, we denote $x\vee y\triangleq\max\{x,y\}, x\wedge y\triangleq\min\{x,y\}, x^+\triangleq x\vee0, x^-\triangleq(-x)\vee0$ for real numbers $x$ and $y$. $[x]$ means the maximum integer not exceeding $x$. $\mathbb{N}$ represents all natural numbers and $\mathbb{N}^*$ represents all non-zero natural numbers. $0<C$ is a constant that may change from line to line. For a random variable $X\in\mathcal{H}$, we truncate it in the form $(-c)\vee X\wedge c$ denoted by $X^{(c)}$ because $XI\{\vert X\vert\leq c\}$ may not be in $\mathcal{H}$. We define $\bE[X]\triangleq\lim_{c\rightarrow\infty}\mbE[X^{(c)}]$ if the limit exists, and $\be[X]\triangleq-\bE[-X]$. It's clear that $\bE[X]=\mbE[X]$ if $X$ is bounded.

\section{Some lemmas and inequalities}
\ \ \ \ \ In this section, we shall present some important lemmas and the crucial inequalities in our proofs of main results. 
\begin{lemma}\label{le3.1}
Suppose $X\in\mathcal{H}$,
\begin{itemize}
	\item[(i)] For any $0\leq c<\infty$,$$\mbE[|X|\wedge c]\leq \int_{0}^{c}\hat{\V}(|X|>x)dx.$$
	\par If \ $\lim_{c\rightarrow\infty}\mbE[(|X|-c)^+]=0$, then 
	\begin{equation}
		\mbE[|X|]\leq C_{\hat{\V}}[|X|].\label{eq3}
	\end{equation}
	\item[(ii)] If $C_{\hat{\V}}[|X|]<\infty$, then $\bE[|X|]$ is well defined and $\bE[|X|]\leq C_{\hat{\V}}[|X|]$. $\bE[X]$ is a sublinear expectation on $\mathcal{H}_1\triangleq\{X\in\mathcal{H}|\ \lim_{c,d\rightarrow\infty}\mbE[(|X|\wedge d-c)^+]=0\}\supseteq\{X\in\mathcal{H}|\ C_{\hat{\V}}[|X|]<\infty\}$. 
	\item[(iii)] If $C_{\hat{\V}}[|X|^r]<\infty$ for some $r>1$, then $C_{\hat{\V}}[|X|]<\infty$.
\end{itemize}
\end{lemma}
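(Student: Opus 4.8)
For part (i), the plan is to start from the pointwise ``layer--cake'' identity $|X|\wedge c=\int_0^c I\{|X|>x\}\dif x$ and discretize it: on the uniform grid $x_k=kc/n$ one checks the pointwise bound $|X|\wedge c\le \frac{c}{n}+\frac{c}{n}\sum_{k=1}^{n-1}I\{|X|>kc/n\}$. Each indicator is dominated by elements $\xi\in\mathcal{H}$, so monotonicity, cash--translatability and sub-additivity of $\mbE$ give $\mbE[|X|\wedge c]\le \frac{c}{n}+\frac{c}{n}\sum_{k=1}^{n-1}\mbE[\xi_k]$ for any admissible dominating functions; taking the infimum over each $\xi_k$ turns $\mbE[\xi_k]$ into $\hat{\V}(|X|>kc/n)$ by the very definition of the upper capacity. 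Letting $n\to\infty$, the right--hand side is a Riemann sum of the non-increasing (hence integrable) integrand and converges to $\int_0^c\hat{\V}(|X|>x)\dif x$, proving the first inequality. For the second assertion I would use $|X|=|X|\wedge c+(|X|-c)^+$ with sub-additivity to get $\mbE[|X|]\le\mbE[|X|\wedge c]+\mbE[(|X|-c)^+]\le C_{\hat{\V}}[|X|]+\mbE[(|X|-c)^+]$ (using $\hat{\V}(|X|>x)\le\hat{\V}(|X|\ge x)$), then let $c\to\infty$.

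For part (ii), the well--definedness and bound for $\bE[|X|]$ are immediate: $\mbE[|X|\wedge c]$ increases in $c$ and is bounded by $C_{\hat{\V}}[|X|]$, so the limit exists and is $\le C_{\hat{\V}}[|X|]$. For the inclusion $\{C_{\hat{\V}}[|X|]<\infty\}\subseteq\mathcal{H}_1$ I would apply the first inequality of (i) to the variable $(|X|-c)^+$ at truncation level $d-c$ and substitute $u=c+x$, obtaining $\mbE[(|X|\wedge d-c)^+]=\mbE[(|X|-c)^+\wedge(d-c)]\le\int_c^d\hat{\V}(|X|>u)\dif u\le\int_c^{\infty}\hat{\V}(|X|>u)\dif u$, which tends to $0$ since the integral defining $C_{\hat{\V}}[|X|]$ converges. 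To define $\bE[X]$ on all of $\mathcal{H}_1$ I would establish the Cauchy property of $\{\mbE[X^{(c)}]\}_c$ through the identity $|X^{(d)}-X^{(c)}|=(|X|\wedge d-c)^+$ (valid for $c\le d$), whence $|\mbE[X^{(d)}]-\mbE[X^{(c)}]|\le\mbE[(|X|\wedge d-c)^+]\to0$.

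The substantive work is verifying that $\bE$ is a sublinear expectation on $\mathcal{H}_1$. Monotonicity, constant preservation and positive homogeneity descend from the corresponding properties of $\mbE$ by passing to the limit in $c$ (using $(\lambda X)^{(c)}=\lambda X^{(c/\lambda)}$ for $\lambda>0$, which also shows $\mathcal{H}_1$ is stable under scalar multiplication). Closure under addition I would get from $|X+Y|\le|X|+|Y|$, $(a+b)\wedge d\le a\wedge d+b\wedge d$ and $(A+B-c)^+\le(A-c/2)^++(B-c/2)^+$, which combine to give $\mbE[(|X+Y|\wedge d-c)^+]\le\mbE[(|X|\wedge d-c/2)^+]+\mbE[(|Y|\wedge d-c/2)^+]\to0$. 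The heart of the matter is sub-additivity. Setting $D_c=(X+Y)^{(c)}-X^{(c)}-Y^{(c)}$, I would combine $\mbE[(X+Y)^{(c)}]\le\mbE[X^{(c)}+Y^{(c)}]+\mbE[|D_c|]\le\mbE[X^{(c)}]+\mbE[Y^{(c)}]+\mbE[|D_c|]$ with a pointwise estimate of the error, and I expect this estimate to be the main obstacle. The key claim, to be proved by a case analysis on the signs and sizes of $X$, $Y$ and $X+Y$, is the \emph{doubly} truncated bound $|D_c|\le(|X|\wedge2c-c)^++(|Y|\wedge2c-c)^++(|X+Y|\wedge2c-c)^+$. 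This is exactly what lets the argument run without any continuity--from--below hypothesis on $\mbE$: every term has the precise form occurring in the definition of $\mathcal{H}_1$ (with $d=2c$), so $\mbE[|D_c|]\to0$ as $c\to\infty$, and letting $c\to\infty$ yields $\bE[X+Y]\le\bE[X]+\bE[Y]$.

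Finally, part (iii) is a change of variables. Writing $C_{\hat{\V}}[|X|^r]=\int_0^{\infty}\hat{\V}(|X|^r\ge t)\dif t$ and substituting $t=s^r$ gives $C_{\hat{\V}}[|X|^r]=\int_0^{\infty}\hat{\V}(|X|\ge s)\,rs^{r-1}\dif s$. Since $rs^{r-1}\ge1$ for $s\ge1$ when $r>1$, I would bound $\int_1^{\infty}\hat{\V}(|X|\ge s)\dif s\le C_{\hat{\V}}[|X|^r]$, and since $\int_0^1\hat{\V}(|X|\ge s)\dif s\le1$, conclude $C_{\hat{\V}}[|X|]\le1+C_{\hat{\V}}[|X|^r]<\infty$.
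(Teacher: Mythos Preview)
Your proposal is correct. For part~(iii) your argument is exactly the paper's: the same substitution $t=s^r$ and the same lower bound $\int_1^\infty\hat{\V}(|X|\ge s)\,\dif s\le C_{\hat\V}[|X|^r]$. For parts~(i) and~(ii) the paper gives no proof at all---it simply cites \cite{Zha16a} and \cite{GZ24}---so your self-contained arguments (the Riemann-sum discretisation of the layer-cake identity, the Cauchy criterion via $|X^{(d)}-X^{(c)}|=(|X|\wedge d-c)^+$, and the doubly truncated bound $|D_c|\le\sum_{W\in\{X,Y,X+Y\}}(|W|\wedge 2c-c)^+$) go well beyond what the paper itself supplies. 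The case analysis behind your doubly truncated estimate does check out: whenever one of $|X|,|Y|,|X+Y|$ exceeds $2c$ the corresponding term already contributes $c\ge|D_c|/3$, and in the remaining region each $(|W|-c)^+$ coincides with its truncated version, so the singly truncated bound $|D_c|\le\sum(|W|-c)^+$ suffices.
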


\begin{proof}
	The proof of $(i)$ and $(ii)$ can be found in \cite{Zha16a} and \cite{GZ24}, and for $(iii)$ we notice 
\begin{align}
	C_{\hat{\V}}[|X|^r]&=\int_{0}^{+\infty}\hat{\V}(|X|^r\geq t)dt\notag\\
	&=\int_{0}^{+\infty}\hat{\V}(|X|\geq t^{\frac{1}{r}})dt\notag\\
	&=\int_{0}^{+\infty}rm^{r-1}\hat{\V}(|X|\geq m)dm\notag\\
	&\geq \int_{1}^{+\infty}\hat{\V}(|X|\geq m)dm.\notag
\end{align}
	So if $C_{\hat{\V}}[|X|^r]<\infty$, then $C_{\hat{\V}}[|X|]<\infty$.
\end{proof} 
The next lemma is about exponential inequalities and Kolmogorov’s maximal inequalities in Zhang \cite{GZ24} and we shall use the latter
to obtain a maximal inequality for $m$-dependent random variables under sublinear expectations.

\begin{lemma}
 Let $\{X_1,\cdots,X_n\}$ be a sequence of independent random variables in the sublinear expectation space $(\Omega,\mathcal{H},\mbE)$. Set $S_n=\sum_{i=1}^n X_i, B_n^2=\sum_{i=1}^n \mbE[X_i^2]$. Then for all $x>0$, $0<\delta\leq1$ and $p\ge 2$,
\begin{align}
	&\hat{\V}\left(\max_{k\leq n}(S_k-\mbE[S_k])\geq x\right)\quad\left(resp.\hat{\mv}\left(\max_{k\leq n}(S_k-\mbe[S_k])\geq x\right)\right)\nonumber\\
	\leq&C_p\delta^{-p}x^{-p}\sum_{i=1}^n\mbE[X_i^2]+\exp\left\{-\frac{x^2}{2(1+\delta)B_n^2}\right\}.\nonumber
\end{align}
Further, by noting that $xe^{-x}\leq e^{-1}$ when $x\geq0$, we have Kolmogorov’s maximal inequalities as follows:
\begin{align}
	&\hat{\V}\left(\max_{k\leq n}(S_k-\mbE[S_k])\geq x\right)\leq Cx^{-2}\sum_{i=1}^n\mbE[X_i^2],\label{eq2}\\
	&\hat{\mv}\left(\max_{k\leq n}(S_k-\mbe[S_k])\geq x\right)\leq Cx^{-2}\sum_{i=1}^n\mbE[X_i^2].\nonumber
\end{align}
\end{lemma}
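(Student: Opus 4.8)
The plan is to run the classical exponential-inequality machinery---an exponential-moment estimate for each summand, a product formula for the moment generating function of the sum via independence, an exponential Markov step, and a Doob-type maximal inequality---while tracking the two error sources that produce the two terms on the right-hand side. I would prove only the $\hat{\V}$ statement in detail; the lower-capacity version is obtained by the parallel argument carried out with $\mbe$ and $\hat{\mv}$ in place of $\mbE$ and $\hat{\V}$.

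\emph{Single-summand bound.} Fix $t>0$ and a truncation level $y>0$, and write $\tilde X_i=X_i-\mbE[X_i]$, so that $\mbE[\tilde X_i]=0$ by cash translatability. Applying the elementary inequality $e^{u}\le 1+u+\tfrac{u^{2}}{2}e^{u^{+}}$ to $u=t\tilde X_i$, then monotonicity and sub-additivity, and using $\mbE[1+t\tilde X_i]=1$ together with $e^{(t\tilde X_i)^{+}}\le e^{ty}$ on the range $\tilde X_i\le y$, I would obtain, for the part of the variable below the truncation level,
\begin{equation*}
\mbE\!\left[e^{t\tilde X_i}\right]\le 1+\tfrac{t^{2}}{2}e^{ty}\,\mbE[\tilde X_i^{2}]\le \exp\!\left\{\tfrac{t^{2}}{2}e^{ty}\,\mbE[X_i^{2}]\right\},
\end{equation*}
the excess over $y$ being kept as a remainder to be estimated separately. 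A technical caveat is that $e^{tx}\notin C_{l,Lip}(\mathbb{R})$, so one works throughout with the bounded truncations $X_i^{(c)}$ and passes to the limit by monotone approximation.

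\emph{Moment generating function and the maximal step.} Because the sequence is independent, the independence identity (Definition~2.3), together with positive homogeneity applied to the nonnegative scalars $e^{ts}$ and $\mbE[e^{tX_i}]$, gives by induction on $k$ the product formula $\mbE[e^{tS_k}]=\prod_{i=1}^{k}\mbE[e^{tX_i}]$, hence $\mbE[e^{t(S_n-\mbE[S_n])}]\le\exp\{\tfrac{t^{2}}{2}e^{ty}B_n^{2}\}$. Here lies the main obstacle: to control $\max_{k\le n}(S_k-\mbE[S_k])$ rather than just $S_n-\mbE[S_n]$ one needs a maximal inequality. I would set up the compensated exponential $e^{t(S_k-\mbE[S_k])}$ as a sublinear supermartingale and invoke the corresponding Doob-type maximal inequality---this is exactly the delicate point, since under sublinearity the tower property holds only as a one-sided inequality, so the first-passage decomposition and the supermartingale estimate must be arranged to respect the direction of every inequality. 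Combining this with the moment bound yields
\begin{equation*}
\hat{\V}\!\left(\max_{k\le n}(S_k-\mbE[S_k])\ge x\right)\le e^{-tx}\exp\!\left\{\tfrac{t^{2}}{2}e^{ty}B_n^{2}\right\}+(\text{remainder from values }>y).
\end{equation*}

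\emph{Optimization and the Kolmogorov form.} I would then take $y$ proportional to $x$ so that $e^{ty}\le 1+\delta$, and $t=x/((1+\delta)B_n^{2})$, which turns the first term into the Gaussian factor $\exp\{-x^{2}/(2(1+\delta)B_n^{2})\}$. The remainder, arising from the events $\{X_i>y\}$, is bounded through the Markov-type property \eqref{eq1} by the second moments $\mbE[X_i^{2}]$, and the freedom in $y$ and in the exponent $p$ is used (as in a Fuk--Nagaev estimate) to convert it into $C_p\delta^{-p}x^{-p}\sum_{i=1}^{n}\mbE[X_i^{2}]$. Finally, Kolmogorov's inequality \eqref{eq2} is read off by taking $\delta=1$, $p=2$ and applying $ue^{-u}\le e^{-1}$ with $u=x^{2}/(4B_n^{2})$, which bounds $\exp\{-x^{2}/(4B_n^{2})\}$ by $\tfrac{4}{e}x^{-2}B_n^{2}$; together with the polynomial term this gives the stated $Cx^{-2}\sum_{i=1}^{n}\mbE[X_i^{2}]$.
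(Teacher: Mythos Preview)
The paper does not give its own proof of this lemma: it is stated and attributed directly to Gu--Zhang \cite{GZ24} (see the sentence introducing it), so there is no in-paper argument to compare your proposal against. Your sketch follows the expected Fuk--Nagaev / exponential-inequality route that underlies the result in the cited sources (notably Zhang \cite{Zha16a} for the non-maximal exponential bound): a one-term MGF estimate via truncation, a product formula from the independence identity, an exponential Markov step, optimization in $t$ and $y$, and finally the $p=2$, $\delta=1$ specialization using $ue^{-u}\le e^{-1}$. That skeleton is correct.

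The one place where your outline is genuinely thin is the maximal step. Under sublinear expectations there is no general Doob stopping theorem, and the first-passage decomposition you allude to requires indicators of events like $\{\tau=k\}$, which need not lie in $\mathcal{H}$; so ``set up $e^{t(S_k-\mbE[S_k])}$ as a sublinear supermartingale and invoke Doob'' is not a move one can simply import. In Zhang's framework the maximum is handled instead by working with the smooth truncated variables $Y_i=(X_i\wedge y)-\mbE[X_i\wedge y]$, noting that each factor $\mbE[e^{tY_i}]\ge 1$, so $\mbE[e^{t(S_k-\mbE[S_k])}]$ is monotone in $k$ along the product formula; the tail of $\max_k$ is then controlled by the $k=n$ bound plus a separate union-type estimate for the events $\{X_i>y\}$ (this is where the $C_p\delta^{-p}x^{-p}\sum_i\mbE[X_i^2]$ term comes from). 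If you want a self-contained proof rather than a citation, that is the point to make precise; the rest of your argument is fine.
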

By \eqref{eq2}, we can obtain a maximal inequality for $m$-dependent random variables, which will play a crucial role in our proof of the strong law of large numbers for blockwise $m$-dependent random variables.
\begin{lemma}\label{le3.3}
	Let $\{X_1,\cdots,X_n\}$ be a sequence of m-dependent random variables in the sublinear expectation space $(\Omega,\mathcal{H},\mbE) $. Then for all $x>0$,
	\begin{equation}
		\hat{\V}\left(\max_{k\leq n}\sum_{i=1}^{k}(X_i-\mbE[X_i])\geq x\right)\leq Cx^{-2}\sum_{i=1}^{n}\mbE[X_i^2],
	\end{equation}
	where C is a constant independent of $n$.
\end{lemma}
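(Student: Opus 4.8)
The plan is to reduce the $m$-dependent case to the independent case already handled in \eqref{eq2} by splitting the index set $\{1,\dots,n\}$ into the $m+1$ residue classes modulo $m+1$. For each fixed residue $r\in\{0,1,\dots,m\}$, let $i_1^{(r)}<i_2^{(r)}<\cdots$ enumerate the indices in $\{1,\dots,n\}$ that are congruent to $r$; any two consecutive such indices differ by at least $m+1>m$, so the first thing I would establish is that the subsequence $(X_{i_1^{(r)}},X_{i_2^{(r)}},\dots)$ is itself an independent sequence in the sense of the paper. If some $\mbE[X_i^2]=\infty$ the asserted bound is trivial, so I may assume all second moments are finite.

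Next I would write $S_k=\sum_{i=1}^k(X_i-\mbE[X_i])=\sum_{r=0}^m S_k^{(r)}$, where $S_k^{(r)}$ collects exactly the summands with index $\equiv r\pmod{m+1}$. The elementary pointwise inequality $\sum_r a_r\le\sum_r\max_{k'} a_{k'}$ gives $\max_{k\le n}S_k\le\sum_{r=0}^m\max_{k\le n}S_k^{(r)}$, so the event $\{\max_{k\le n}S_k\ge x\}$ is contained in $\bigcup_{r=0}^m\{\max_{k\le n}S_k^{(r)}\ge x/(m+1)\}$. Applying sub-additivity of $\hat{\V}$, I bound $\hat{\V}(\max_{k\le n}S_k\ge x)$ by the sum over $r$ of the capacities of these pieces. (Note that for $k$ below the first residue-$r$ index one has $S_k^{(r)}=0<x/(m+1)$, so restricting to the genuine partial sums of the subsequence changes nothing.)

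For each $r$, $\max_{k\le n}S_k^{(r)}$ is a maximum of centered partial sums of the independent subsequence $(X_{i_l^{(r)}})_l$. Using that the expectation is additive over independent variables — so that the centering $\sum_l\mbE[X_{i_l^{(r)}}]$ agrees with $\mbE[\sum_l X_{i_l^{(r)}}]$ — I apply \eqref{eq2} at threshold $x/(m+1)$ to obtain $\hat{\V}(\max_{k\le n}S_k^{(r)}\ge x/(m+1))\le C(m+1)^2 x^{-2}\sum_{i\equiv r}\mbE[X_i^2]$. Summing over $r=0,\dots,m$ recombines $\sum_{r=0}^m\sum_{i\equiv r}\mbE[X_i^2]=\sum_{i=1}^n\mbE[X_i^2]$, and since $m$ is fixed the factor $(m+1)^2$ is absorbed into the constant, giving the claim with $C$ independent of $n$.

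The main obstacle is the very first step. In Peng's framework independence is not symmetric and is formulated through nested sublinear expectations, so I cannot simply cite the classical fact that a subsequence of an independent sequence is independent. Instead I would verify directly from the definition of $m$-dependence and the definition of independence that deleting the indices not congruent to $r$ preserves independence: since the gap $i_{l+1}^{(r)}-i_l^{(r)}>m$ forces $X_{i_{l+1}^{(r)}}$ to be independent of the full block $(X_1,\dots,X_{i_l^{(r)}})$, I then pass to the subvector $(X_{i_1^{(r)}},\dots,X_{i_l^{(r)}})$ by composing the test function with a coordinate projection, which remains in $C_{b,Lip}(\mathbb{R}^{\,\cdot})$. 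This reduction is the point that must be argued with care rather than assumed.
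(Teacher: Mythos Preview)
Your proposal is correct and follows essentially the same route as the paper: decompose the partial sums into the $m+1$ residue classes modulo $m+1$, use that each extracted subsequence is independent, and apply the Kolmogorov-type maximal inequality \eqref{eq2} at level $x/(m+1)$. The paper additionally isolates the case $n<m+1$ with a crude union bound, but this is not needed since your residue-class argument already covers small $n$; your explicit verification that the extracted subsequence is independent (via composing with coordinate projections in $C_{b,Lip}$) is a point the paper simply asserts.
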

\begin{proof}
In case of $n< m+1$, we take $Z_i\triangleq X_i-\mbE[X_i]$. Then
\begin{align}
	\hat{\V}\left(\max_{k\leq n}\sum_{i=1}^{k}Z_i\geq x\right)
	&=\hat{\V}\left(\bigcup_{k=1}^{n}\left\{\sum_{i=1}^{k}Z_i\geq x\right\}\right)\nonumber\\
	&\leq\sum_{k=1}^{n}\hat{\V}\left(\sum_{i=1}^{k}Z_i\geq x\right)\nonumber\\
	&\leq\sum_{k=1}^{n}\hat{\V}\left(\bigcup_{i=1}^{k}\left\{Z_i\geq \frac{x}{k}\right\}\right)\nonumber\\
	&\leq\sum_{k=1}^{n}\sum_{i=1}^{k}\hat{\V}\left(Z_i\geq \frac{x}{k}\right)\nonumber\\
	&\leq\sum_{k=1}^{n}\sum_{i=1}^{k}\frac{k^2}{x^2} \mbE[Z_i^2]\nonumber\\
	&=\sum_{i=1}^{n}\sum_{k=i}^{n}\frac{k^2}{x^2}\mbE[Z_i^2]\nonumber\\
	&\leq\sum_{k=1}^{n}k^2\cdot x^{-2}\cdot \sum_{i=1}^{n}\mbE[Z_i^2]\nonumber\\
    &\leq \frac{(m+1)(m+2)(2m+3)}{6}\cdot x^{-2} \sum_{i=1}^{n}\mbE[Z_i^2].\nonumber
\end{align}
In the last inequality we use the truth that $\sum_{k=1}^{n}k^2=\frac{n(n+1)(2n+1)}{6}$ and notice that $n< m+1$. 
\par In the case of $n \geq m+1$, we take 
\begin{align}
	Z_{ki}\triangleq X_{i(m+1)+k}-\mbE[X_{i(m+1)+k}], i=0,\cdots ,[(n-k)/(m+1)].\nonumber
\end{align}
Then 
\begin{align}
	\hat{\V}\left(\max_{k\leq n}\sum_{i=1}^{k}(X_i-\mbE[X_i])\geq x\right)\nonumber
	&\leq\hat{\V}\left(\bigcup_{k=1}^{m+1}\max_{0\leq j\leq [(n-k)/(m+1)]}\sum_{i=0}^{j}Z_{ki}\geq\frac{x}{m+1}\right)\nonumber\\
	&\leq\sum_{k=1}^{m+1}\hat{\V}\left(\max_{0\leq j\leq [(n-k)/(m+1)]}\sum_{i=0}^{j}Z_{ki}\geq\frac{x}{m+1}\right)\nonumber\\
	&\leq C\sum_{k=1}^{m+1}\frac{(m+1)^2}{x^2}\sum_{i=0}^{[(n-k)/(m+1)]}\mbE[X_{i(m+1)+k}^2]\nonumber\\
	&=C(m+1)^2 \cdot x^{-2}\sum_{i=1}^{n}\mbE[X_i^2].\nonumber
\end{align}
In the second to last inequality we have used \eqref{eq2} because $\{Z_{ki},i=0,\cdots,[(n-k)/(m+1)]\}$ is a sequence of independent random variables.

The proof is completed.
\end{proof}
\par Next, we shall establish the Rademacher-Mensov inequality under the sublinear expectation, which will play a crucial role in the proof of the strong law of large numbers for orthogonal random variables. One can refer to \cite{Rév68} for the Rademacher-Mensov inequality in classical probability theory. 
\begin{lemma}\label{le3.4}
	Let $\{X_1,\cdots,X_n\}$ be a sequence of orthonormal random variables in the sublinear expectation space $(\Omega,\mathcal{H},\mbE)$, and $c_1,\cdots,c_n$ be a sequence of real numbers. Then
	\begin{equation}
		\mbE\left[\max_{1\leq k\leq n}\left(\sum_{j=1}^{k}c_jX_j\right)^2\right]\leq (\log_2{4n})^2\sum_{j=1}^{n}c_j^2.\label{eq5}
	\end{equation}
\end{lemma}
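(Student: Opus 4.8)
The plan is to adapt the classical dyadic (Rademacher--Menshov) decomposition to the sublinear setting, replacing linearity by monotonicity, positive homogeneity and sub-additivity of $\mbE$. Write $S_k=\sum_{j=1}^{k}c_jX_j$ with $S_0=0$. First I would reduce to the case $n=2^N$ with $N=\lceil\log_2 n\rceil$ by setting $c_j=0$ for $n<j\le 2^N$; the added terms vanish, so $\max_{1\le k\le n}S_k^2=\max_{1\le k\le 2^N}S_k^2$ and $\sum_{j\le 2^N}c_j^2=\sum_{j\le n}c_j^2$, and no new random variables are needed (orthonormality is only ever invoked among $X_1,\dots,X_n$). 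For $0\le p\le N$ and $0\le i<2^{N-p}$ introduce the dyadic block sums $B_{p,i}=S_{(i+1)2^p}-S_{i2^p}=\sum_{j=i2^p+1}^{(i+1)2^p}c_jX_j$. The combinatorial heart is that, reading $k$ in binary, the interval $(0,k]$ is a disjoint union of at most $N$ dyadic intervals lying at distinct levels $p\in\{0,\dots,N\}$, so that $S_k$ is a sum of at most $N$ of the $B_{p,i}$, at most one per level.

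Next I would convert this into a pointwise estimate. Applying the elementary Cauchy--Schwarz bound $(\sum_{l=1}^{r}a_l)^2\le r\sum_{l=1}^{r}a_l^2$ to the (at most $N$) blocks composing $S_k$, and using that these blocks sit at distinct levels, gives the pointwise inequality $S_k^2\le N\sum_{p=0}^{N}\max_{0\le i<2^{N-p}}B_{p,i}^2$, uniformly in $k$, hence $\max_{1\le k\le 2^N}S_k^2\le N\sum_{p=0}^{N}\max_i B_{p,i}^2$. Taking $\mbE$ and using monotonicity, positive homogeneity and sub-additivity, together with $\max_i B_{p,i}^2\le\sum_i B_{p,i}^2$, yields
\[
\mbE\Big[\max_{1\le k\le 2^N}S_k^2\Big]\le N\sum_{p=0}^{N}\sum_{i}\mbE[B_{p,i}^2].
\]

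The decisive step is the block-variance estimate $\mbE[B_{p,i}^2]\le\sum_{j=i2^p+1}^{(i+1)2^p}c_j^2$. Granting it, for each fixed level $p$ the blocks partition $\{1,\dots,2^N\}$, so the inner double sum collapses to $\sum_{p=0}^{N}\sum_j c_j^2=(N+1)\sum_{j=1}^{n}c_j^2$, giving $\mbE[\max_k S_k^2]\le N(N+1)\sum_j c_j^2$. The stated constant is then admissible because $N+1=\lceil\log_2 n\rceil+1\le\log_2 n+2=\log_2(4n)$, whence $N(N+1)\le(N+1)^2\le(\log_2 4n)^2$.

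I expect the block-variance estimate to be the main obstacle, since this is precisely where orthonormality enters and where the sublinear framework genuinely differs from the classical one. Expanding $B_{p,i}^2=\sum_j c_j^2X_j^2+2\sum_{j<j'}c_jc_{j'}X_jX_{j'}$ and using sub-additivity reduces the diagonal part to $\sum_j c_j^2\,\mbE[X_j^2]=\sum_j c_j^2$ (by $\mbE[X_j^2]=1$ and positive homogeneity). The difficulty is the off-diagonal part: $\mbE[c_jc_{j'}X_jX_{j'}]$ equals $c_jc_{j'}\mbE[X_jX_{j'}]=0$ only when $c_jc_{j'}\ge0$, whereas for $c_jc_{j'}<0$ it becomes governed by the lower expectation $\mbe[X_jX_{j'}]$ rather than the vanishing upper expectation. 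The careful point is therefore to guarantee that the cross terms do not contribute positively; making this rigorous under $\mbE$ — i.e.\ ensuring the products $X_jX_{j'}$ carry null expectation in the relevant one-sided sense needed here — is the crux of the argument, after which the decomposition and counting above deliver the claim.
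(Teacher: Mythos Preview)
Your argument is the paper's proof essentially verbatim: reduce to $n=2^N$ by padding with zero coefficients, write each $S_k$ as a sum of at most $N$ dyadic block sums $\psi_{\alpha\beta}$, apply Cauchy--Schwarz pointwise to obtain $\max_k S_k^2\le N\sum_{\alpha,\beta}\psi_{\alpha\beta}^2$, take $\mbE$ using monotonicity and sub-additivity, and then invoke orthonormality for the block-variance bound to collect the factor $N(N+1)\le(\log_2 4n)^2$.

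The one place you are more careful than the paper is the block-variance step. The paper simply writes ``By the orthogonality of $\{X_1,\dots,X_n\}$, we have $\sum_{\alpha\beta}\mbE[\psi_{\alpha\beta}^2]\le(v+1)\sum_j c_j^2$'' and moves on; it never confronts the sign issue you raise. Your observation is correct: under the paper's Definition~2.6 one only has $\mbE[X_jX_{j'}]=0$, and for $c_jc_{j'}<0$ positive homogeneity gives $\mbE[c_jc_{j'}X_jX_{j'}]=c_jc_{j'}\,\mbe[X_jX_{j'}]$, which is nonnegative (since $\mbe\le\mbE=0$) but not a priori zero. Thus the inequality $\mbE\big[(\sum_j c_jX_j)^2\big]\le\sum_j c_j^2$ genuinely needs the two-sided condition $\mbE[X_jX_{j'}]=\mbe[X_jX_{j'}]=0$ for $j\ne j'$; the paper uses this tacitly without stating it. Modulo that unspoken strengthening of the orthonormality hypothesis, your proposal and the paper's proof coincide.
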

\begin{proof}
	We first prove for the case of $n=2^v$, $\forall v\in \mathbb{N}^*$. Let
	\begin{align}
		 \eta_j&\triangleq c_1X_1+\cdots+c_jX_j,\notag\\
			\psi_{\alpha\beta}&\triangleq c_{\alpha+1}X_{\alpha+1}+\cdots+c_{\beta+1}X_{\beta+1},\notag
	\end{align}
    where $\alpha\triangleq\mu\cdot 2^k$; $\beta\triangleq\beta(\alpha)\triangleq(\mu+1)\cdot 2^k$; $k=0,1,\cdots,v$; $\mu=0,1,\cdots,2^{v-k}-1.$ 
    \par We consider the $\eta_j$ as the sum of some $\psi_{\alpha\beta}$ and put
    $$\eta_j=\sum_i\psi_{\alpha_i\beta_i},$$
    where $\beta_1-\alpha_1>\beta_2-\alpha_2>\cdots$, then the number of the sum is less than $v$. 
    \par By the Cauchy inequality, for $j=1,\cdots,n$ we have
    \begin{align}
    	\eta^2_j&=(\sum_i\psi_{\alpha_i\beta_i}\cdot 1)^2\notag\\
    	        &\leq v\cdot\sum_i\psi^2_{\alpha_i\beta_i}\notag\\
    	        &\leq v\cdot\sum_{\alpha\beta}\psi^2_{\alpha\beta},\notag
    \end{align}
    where $\sum_{\alpha\beta}$ means that $\alpha$ and $\beta$ run through all their possible values. Therefore,
    \begin{equation}
    	\max_{1\leq j\leq n}\eta^2_j\leq v\cdot\sum_{\alpha\beta}\psi^2_{\alpha\beta}.\label{eq4}
    \end{equation} 
    Taking the sublinear on the both side of \eqref{eq4} and by the sub-additivity of $\mbE$, we have
    $$\mbE\left[\max_{1\leq j\leq N}\eta^2_j\right]\leq v\cdot\sum_{\alpha\beta}\mbE[\psi^2_{\alpha\beta}].$$
    By the orthogonality of $\{X_1,\cdots,X_n\}$, we have 
    $$\sum_{\alpha\beta}\mbE[\psi^2_{\alpha\beta}]\leq (v+1)\cdot \sum_{j=1}^{n}c^2_j.$$
Finally,
$$\mbE\left[\max_{1\leq j\leq N}\eta^2_j\right]\leq v\cdot (v+1)\cdot \sum_{j=1}^{n}c^2_j\leq (\log_{2}2n)^2\sum_{j=1}^{n}c^2_j,$$
and \eqref{eq5} holds for the case of $n=2^v$, $\forall v\in \mathbb{N}^*$. As for the case of $2^l<n<2^{l+1}$ for some $l\in \mathbb{N}^*$, we only need to take $c_{n+1}=0,\cdots,c_{2^{l+1}}=0$ in the case of $v=l+1$, and then the proof is completed.
\end{proof}

\par The next lemma is the convergence part of Borel-Cantelli lemma for a countable sub-additive capacity in Zhang \cite{Zha23}.

\begin{lemma}\label{le3.5}
Let $\{A_n,n\in \mathbb{N}^*\}$ be in $\mathcal{G}$, and $\hat{\V}^*$ be a countable sub-additive capacity on $\mathcal{G}$. If $\sum_{n=1}^{\infty}\hat{\V}^*(A_n)<\infty$, then
\begin{equation}
\hat{\V}^*(A_n,i.o.)=0,
\end{equation} 
 where $\{A_n,i.o.\}=\bigcap_{n=1}^\infty\bigcup_{i=n}^\infty A_i$.
\end{lemma}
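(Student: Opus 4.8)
The plan is to reduce the statement to just two properties of $\hat{\V}^*$ recorded in Section 2 --- its monotonicity (as a capacity) and its countable sub-additivity --- and then to run the classical convergence half of the Borel--Cantelli argument verbatim. No additivity, independence, or structure of the underlying sublinear expectation $\mbE$ is needed; everything is purely measure-theoretic at the level of the set function $\hat{\V}^*$.

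Concretely, I would first fix an arbitrary $N\in\mathbb{N}^*$ and use the set inclusion
$$\{A_n,i.o.\}=\bigcap_{k=1}^{\infty}\bigcup_{i=k}^{\infty}A_i\subseteq\bigcup_{i=N}^{\infty}A_i,$$
which holds because the intersection over $k$ is contained in its $k=N$ term. Applying the monotonicity of $\hat{\V}^*$ and then its countable sub-additivity gives
$$\hat{\V}^*(\{A_n,i.o.\})\leq\hat{\V}^*\left(\bigcup_{i=N}^{\infty}A_i\right)\leq\sum_{i=N}^{\infty}\hat{\V}^*(A_i).$$
The left-hand side does not depend on $N$, whereas the right-hand side is the tail of the convergent series $\sum_{i=1}^{\infty}\hat{\V}^*(A_i)<\infty$ and hence tends to $0$ as $N\to\infty$. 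Letting $N\to\infty$ therefore yields $\hat{\V}^*(\{A_n,i.o.\})\leq 0$, and since $\hat{\V}^*$ takes values in $[0,1]$ we conclude $\hat{\V}^*(\{A_n,i.o.\})=0$, as claimed.

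I do not expect any genuine obstacle here: the computation is entirely routine bookkeeping. The one conceptual point worth stressing is that the argument uses countable sub-additivity in an essential way at the tail-summability step, and it would fail for $\hat{\V}$ alone, which is only finitely sub-additive. This is precisely the reason the outer capacity $\hat{\V}^*$ was introduced in Section 2, and it is this lemma that makes $\hat{\V}^*$ (rather than $\hat{\V}$) the natural object for deducing almost-sure convergence from summable capacity estimates in the proofs of the main theorems.
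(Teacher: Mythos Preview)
Your argument is correct and is exactly the standard proof of the convergence half of Borel--Cantelli for a countably sub-additive set function. Note that the paper does not actually supply a proof of this lemma; it merely quotes the result from Zhang \cite{Zha23}, so there is nothing to compare beyond observing that your proof is the canonical one.
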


\begin{lemma}
	\begin{itemize}
	\item[(i)] Let $\{a_n\}_{n\geq1}$ be a sequence with $a_n\nearrow \infty$, then for any $M>1$ there exists a sequence $\{n_k\}_{k\geq1}$ with $n_k\nearrow \infty$ such that
		\begin{equation}
			Ma_{n_k}\leq a_{n_{k+1}}\leq M^3a_{n_k+1}.
		\end{equation}
	\item[(ii)] Let $a_n\geq 0$, $n\in \mathbb{N}^*$ and $\sum_{n=1}^{\infty}a_n<\infty$, then there exists $b_n\geq 0$, $n\in \mathbb{N}^*$ such that $\sum_{n=1}^{\infty}b_n<\infty$ and $\frac{a_n}{b_n}\downarrow0$ as $n\rightarrow\infty$.
	\label{le3.6}
	\end{itemize}
	
\end{lemma}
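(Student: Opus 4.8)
The plan is to treat the two parts separately, since they rest on different ideas.

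For part (i) I would build the subsequence recursively by a first-crossing rule: set $n_1=1$ (or the first index with $a_{n_1}>0$) and, given $n_k$, let $n_{k+1}$ be the smallest index $n>n_k$ with $a_n\ge M a_{n_k}$. Such an index exists because $a_n\nearrow\infty$, and $n_k\nearrow\infty$ since every step advances the index. The lower bound $M a_{n_k}\le a_{n_{k+1}}$ is then immediate from the defining inequality, while minimality yields the auxiliary fact $a_{n_{k+1}-1}<M a_{n_k}$, which is the lever for the upper bound. For the upper bound $a_{n_{k+1}}\le M^3 a_{n_k+1}$ I would argue by cases on the crossing index. If $n_{k+1}=n_k+1$, then $a_{n_{k+1}}=a_{n_k+1}\le M^3 a_{n_k+1}$ trivially. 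If $n_{k+1}\ge n_k+2$, then $n_{k+1}-1\ge n_k+1$, so monotonicity gives $a_{n_k+1}\le a_{n_{k+1}-1}<M a_{n_k}$, and the only quantity left to control is the single increment from $a_{n_{k+1}-1}$ to $a_{n_{k+1}}$ at the crossing.

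When that increment satisfies $a_{n_{k+1}}\le M^2 a_{n_{k+1}-1}$ one gets $a_{n_{k+1}}\le M^2\cdot M a_{n_k}=M^3 a_{n_k}\le M^3 a_{n_k+1}$, and the claim follows with room to spare; this is exactly why the statement carries the cube $M^3$ rather than $M$. The hard part, and the reason the bound is phrased with $a_{n_k+1}$ (the value one step past $n_k$) instead of $a_{n_k}$, is a sequence with arbitrarily large single-step jumps, where the first crossing can overshoot every admissible window $[M a_{n_k},M^3 a_{n_k+1}]$. The fix I would pursue is to select the subsequence to sit at the pre-jump positions: whenever $a$ makes a jump of factor exceeding $M^2$, I take the chosen index to be the last index before that jump, so that $a_{n_k+1}$ is the large post-jump value. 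Then the large jump sitting between two consecutive chosen indices supplies the factor $M$ needed for the next lower bound, while feeding the large $a_{n_k+1}$ into the upper bound of the same step provides precisely the slack to absorb the overshoot. In the complementary regime, where all tail increments are bounded by $M^2$, the plain first-crossing rule already works. Reconciling these two regimes into a single monotone subsequence is the one delicate bookkeeping step, and I expect it to be the main obstacle.

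For part (ii) I would use the classical tail-sum trick. Put $R_n=\sum_{j=n}^\infty a_j$, so that $R_n\downarrow 0$ and $a_n=R_n-R_{n+1}$, and define $b_n=a_n/\sqrt{R_n}$ on the indices where $R_n>0$ (the remaining indices being trivial). Then $a_n/b_n=\sqrt{R_n}$ is manifestly nonincreasing and tends to $0$, which gives $\frac{a_n}{b_n}\downarrow 0$. For summability I would use the elementary inequality $\sqrt{R_n}-\sqrt{R_{n+1}}=\frac{R_n-R_{n+1}}{\sqrt{R_n}+\sqrt{R_{n+1}}}\ge\frac{a_n}{2\sqrt{R_n}}$, whence $b_n=\frac{a_n}{\sqrt{R_n}}\le 2(\sqrt{R_n}-\sqrt{R_{n+1}})$, and the right-hand side telescopes to $2\sqrt{R_1}<\infty$. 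This part is routine; the genuine content of the lemma lies in the jump-handling of part (i).
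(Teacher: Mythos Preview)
For part~(ii) your construction is essentially the paper's own: the paper sets $b_n=\sqrt{R_n}-\sqrt{R_{n+1}}$ with $R_n=\sum_{k\ge n}a_k$, which gives $a_n/b_n=\sqrt{R_n}+\sqrt{R_{n+1}}\downarrow 0$ and a telescoping sum. Your choice $b_n=a_n/\sqrt{R_n}$ differs from this by a factor between $1$ and $2$, and your summability argument is exactly the comparison $a_n/\sqrt{R_n}\le 2(\sqrt{R_n}-\sqrt{R_{n+1}})$ back to the paper's telescoping sequence. So the two are the same idea in slightly different packaging.

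For part~(i) the paper does not give a proof at all; it simply cites Lemma~3.3 of Wittmann~\cite{Wittmann85}. There is therefore nothing to compare your argument against in the paper itself. Your diagnosis of the difficulty is accurate: the first-crossing rule handles the lower bound and the ``no large jump'' case cleanly, and you correctly identify that the appearance of $a_{n_k+1}$ (rather than $a_{n_k}$) on the right and the cube $M^3$ (rather than $M$) are there precisely to absorb a single overshoot at a large jump. Your proposed fix---anchoring chosen indices at pre-jump positions so that the post-jump value becomes $a_{n_k+1}$---is the right intuition and is essentially how the Wittmann argument proceeds. What you have written is not yet a proof, as you yourself note: the interleaving of the first-crossing steps with the forced pre-jump indices, and the verification that both inequalities survive at the seams, is bookkeeping you have deferred. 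That is a genuine gap in the write-up, though not in the strategy; consulting Wittmann's original would let you close it.
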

$(i)$ is Lemma 3.3 of Wittmann \cite{Wittmann85}. For $(ii)$, we only need to take $b_n\triangleq\sqrt{\sum_{k=n}^{\infty}a_k}-\sqrt{\sum_{k=n+1}^{\infty}a_k}$.

\section{Main results}\label{sec4}
\par \ \ \ \ \ We often don't distinguish different sets with probability 1 in classical probability theory, but it is worth noting that we need to distinguish different sets with capacity 1. Because capacities generally does not have the property: $\forall A,B$ in $\mathcal{F}$, $\hat{\V}^*(A)=\hat{\V}^*(B)=1\Rightarrow \hat{\V}^*(A\cap B)=1$. For instance, let $\Omega=[0,2]$, $P_i$ is the probability on $\mathcal{B}([0,2])$ such that $P_i$ is the Lebesgue measure on $\mathcal{B}([i,i+1])$, $i=1,2$. $\hat{\V}(A)\triangleq\sup_{i=1,2}P_i(A)$, $\forall A \in \mathcal{B}([0,2])$. Then $\hat{\V}([0,1])=\hat{\V}([1,2])=1$, but $\hat{\V}([0,1]\cap [1,2])=\hat{\V}\{1\}=0$.  Hence, 
We need some techniques to overcome this crux when proving the law of large numbers for capacities.
\par We first show that the independence condition of Theorem 3.1 in Zhang \cite{GZ24} can be weakened to $m$-dependence.

\begin{theorem}\label{th4.1}
	Let $\{X_n;n\geq 1\}$ be a sequence of m-independent random variables in the sublinear expectation space $(\Omega,\mathcal{H},\mbE)$ and $\mbE[X_n]=\mbe[X_n]=0, n\geq 1$. Set $S_n=\sum_{i=1}^nX_i$ and suppose   $\{a_n\}_{n\geq1}$ is a sequence such that $1\leq a_n\nearrow\infty$ and
	$$\sum_{n=1}^{\infty}\frac{\mbE[X_n^2]}{a_n^2}<\infty.$$
   Then
   \begin{equation}
   		\hat{\V}^*\left(\limsup_{n\rightarrow\infty}\frac{S_n}{a_n}>0\enspace or\enspace\liminf_{n\rightarrow\infty}\frac{S_n}{a_n}<0\right)=0.\label{eq6}
   \end{equation}
	\end{theorem}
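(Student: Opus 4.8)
The plan is to prove the two one-sided statements $\hat{\V}^*(\limsup_{n}S_n/a_n>0)=0$ and $\hat{\V}^*(\liminf_{n}S_n/a_n<0)=0$ separately and then combine them using the countable sub-additivity of $\hat{\V}^*$. The second reduces to the first applied to $\{-X_n\}$, which is again $m$-dependent with $\mbE[-X_n]=\mbe[-X_n]=0$ and $\mbE[(-X_n)^2]=\mbE[X_n^2]$, since $\liminf_{n}S_n/a_n<0$ is exactly $\limsup_{n}(-S_n)/a_n>0$. For the first statement I would write $\{\limsup_{n}S_n/a_n>0\}=\bigcup_{\ell\ge1}\{\limsup_{n}S_n/a_n>1/\ell\}$ and, again by countable sub-additivity, reduce to showing $\hat{\V}^*(\limsup_{n}S_n/a_n>\varepsilon)=0$ for an arbitrary fixed $\varepsilon>0$. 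Since $\limsup_{n}S_n/a_n>\varepsilon$ forces $S_n\ge\varepsilon a_n$ for infinitely many $n$, it suffices to control the event $\{S_n\ge\varepsilon a_n\ \text{i.o.}\}$.

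To bring in the maximal inequality of Lemma \ref{le3.3}, I would fix $M>1$ and apply Lemma \ref{le3.6}(i) to the increasing sequence $\{a_n\}$ to obtain a subsequence $\{n_k\}$ with $Ma_{n_k}\le a_{n_{k+1}}\le M^{3}a_{n_k+1}$. Setting $A_k\triangleq\{\max_{n\le n_{k+1}}S_n\ge\varepsilon a_{n_k+1}\}$, the key observation is the containment $\{S_n\ge\varepsilon a_n\ \text{i.o.}\}\subseteq\{A_k\ \text{i.o.}\}$: if $n\in(n_k,n_{k+1}]$ satisfies $S_n\ge\varepsilon a_n$ then, since $a_n\ge a_{n_k+1}$, we get $\max_{n'\le n_{k+1}}S_{n'}\ge S_n\ge\varepsilon a_{n_k+1}$, so $A_k$ occurs, and infinitely many such $n$ force infinitely many $k$. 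Because $\{X_1,\dots,X_{n_{k+1}}\}$ is $m$-dependent with $\mbE[X_i]=0$, Lemma \ref{le3.3} together with $\hat{\V}^*\le\hat{\V}$ and $a_{n_k+1}\ge M^{-3}a_{n_{k+1}}$ yields
\[
\hat{\V}^*(A_k)\le \hat{\V}(A_k)\le \frac{C}{\varepsilon^{2}a_{n_k+1}^{2}}\sum_{i=1}^{n_{k+1}}\mbE[X_i^2]\le \frac{CM^{6}}{\varepsilon^{2}a_{n_{k+1}}^{2}}\sum_{i=1}^{n_{k+1}}\mbE[X_i^2].
\]

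The heart of the argument, and the step I expect to be the main obstacle, is verifying the Borel--Cantelli summability $\sum_k\hat{\V}^*(A_k)<\infty$, i.e. $\sum_k a_{n_{k+1}}^{-2}\sum_{i\le n_{k+1}}\mbE[X_i^2]<\infty$. I would prove this by interchanging the order of summation: writing $\sigma_i\triangleq\mbE[X_i^2]$, the sum equals $\sum_i\sigma_i\sum_{k:\,n_{k+1}\ge i}a_{n_{k+1}}^{-2}$. For fixed $i$ lying in the block $(n_{k_0},n_{k_0+1}]$, the geometric growth $a_{n_{k+1}}\ge M^{k-k_0}a_{n_{k_0+1}}$ bounds the inner sum by $(1-M^{-2})^{-1}a_{n_{k_0+1}}^{-2}\le(1-M^{-2})^{-1}a_i^{-2}$, using $a_{n_{k_0+1}}\ge a_i$. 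Hence the whole double sum is at most $(1-M^{-2})^{-1}\sum_i\sigma_i/a_i^{2}<\infty$ by hypothesis. The two-sided control in Lemma \ref{le3.6}(i) is exactly what makes this work and is used twice: the lower bound $a_{n_{k+1}}\ge Ma_{n_k}$ supplies the geometric decay needed for summability, while the upper bound $a_{n_{k+1}}\le M^{3}a_{n_k+1}$ converts the in-block normalizer $a_{n_k+1}$ back into $a_{n_{k+1}}$ in the estimate for $\hat{\V}(A_k)$.

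With summability in hand, Lemma \ref{le3.5} gives $\hat{\V}^*(A_k\ \text{i.o.})=0$, hence $\hat{\V}^*(S_n\ge\varepsilon a_n\ \text{i.o.})=0$ and therefore $\hat{\V}^*(\limsup_{n}S_n/a_n>\varepsilon)=0$. Taking $\varepsilon=1/\ell$ and summing over $\ell$ proves $\hat{\V}^*(\limsup_{n}S_n/a_n>0)=0$; the symmetric argument applied to $\{-X_n\}$ gives $\hat{\V}^*(\liminf_{n}S_n/a_n<0)=0$; and one final use of sub-additivity of $\hat{\V}^*$ yields \eqref{eq6}. I should emphasize that this scheme bounds the bad event directly and never needs to intersect two capacity-one sets, thereby sidestepping the failure of $\hat{\V}^*(A)=\hat{\V}^*(B)=1\Rightarrow\hat{\V}^*(A\cap B)=1$ noted before the theorem.
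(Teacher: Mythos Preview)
Your proposal is correct. It differs from the paper's proof in two structural choices. First, the paper bounds the block \emph{increments} $\max_{n_k<n\le n_{k+1}}(S_n-S_{n_k})$ (splitting into residue classes modulo $m+1$ and applying \eqref{eq2} directly) rather than the full maxima $\max_{n\le n_{k+1}}S_n$ via Lemma~\ref{le3.3}; this localizes the Kolmogorov bound to $\sum_{j=n_k+1}^{n_{k+1}}\mbE[X_j^2]$, so the Borel--Cantelli sum telescopes without your Fubini/geometric-series step. Second, the paper invokes Lemma~\ref{le3.6}(ii) to choose $\epsilon_n\searrow0$ with $\sum_n\mbE[X_n^2]/(\epsilon_n^2a_n^2)<\infty$ and works with shrinking thresholds $\epsilon_{n_k}a_{n_{k+1}}$, then defers the passage from increment control to $\limsup_n S_n/a_n\le 0$ to Theorem~3.1 of \cite{GZ24}. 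You replace both devices: the overlap created by using full maxima is absorbed by the geometric growth $a_{n_{k+1}}\ge M a_{n_k}$ and order-interchange, and the shrinking $\epsilon_k$ is replaced by a fixed $\varepsilon$ followed by a countable union over $\varepsilon=1/\ell$. The trade-off is that the paper's summability step is one line, while your argument is entirely self-contained (no appeal to \cite{GZ24}, no need for Lemma~\ref{le3.6}(ii)).
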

\begin{remark}
	By the sub-additivity of capacities, we know that for $A\in \mathcal{F}$,  $\hat{\V}^*(A)=0\Rightarrow\hat{\V}^*(A^c)=1$. Therefore, \eqref{eq6} means $\hat{\V}^*(lim_{n\rightarrow\infty}\frac{S_n}{a_n}=0)=1$.
	
\end{remark}
\par The next result shows a strong limit behavior of a sequence of blockwise $m$-dependent random variables with finite $r$-order Choquet integral under the sublinear expectation.
\begin{theorem}\label{th4.2}
	Let $\{n_i\}_{i\geq 1}$ be a given strictly increasing sequence of natural numbers, $\{X_n\}_{n\geq1}$ is $m$-dependent with respect to $\{n_i\}_{i\geq 1}$ in the sublinear expectation space $(\Omega,\mathcal{H},\mbE)$. Denote
	\par $I_k\triangleq\{i|[2^k,2^{k+1})\cap [n_i,n_{i+1})\neq \emptyset\}$, $v_k\triangleq \#I_k$, $k\in \mathbb{N}^*$,
	\par $[l_{ki},r_{ki})\triangleq[2^k,2^{k+1})\cap [n_i,n_{i+1})$, $i\in I_k,$
	\par $\Phi(n)\triangleq\max_{0\leq j\leq k}v_j$ if $n\in[2^k,2^{k+1})$, $S_n\triangleq\sum_{k=1}^{n}X_k.$
    \par Assume
	\begin{itemize}
	\item[(i)] There is a random variable $Z$ in $(\Omega,\mathcal{H},\mbE)$ such that $\frac{1}{n}\sum_{k=1}^{n}\hat{\V}(|X_k|>t)\leq C\hat{\V}(|Z|>t)$, $\forall n\in \mathbb{N}^*,\forall t>0,$
	where $C$ is a numarical constant,
	\item[(ii)] $C_{\hat{\V}}(|Z|^r)<\infty$, for some $r\in [1,2)$, 
	\end{itemize}
	then
	\begin{equation}
	\hat{\V}^*\left(\limsup_{n\rightarrow\infty}\frac{S_n-\sum_{k=1}^{n}\bE[X_k]}{n^{\frac{1}{r}}\Phi(n)}>0\ or\ \liminf_{n\rightarrow\infty}\frac{S_n-\sum_{k=1}^{n}\be[X_k]}{n^{\frac{1}{r}}\Phi(n)}<0\right)=0.\label{eq7}
	\end{equation}

	\begin{remark}
	 By $(i)$ and $(ii)$ we have $\frac{1}{n}\sum_{k=1}^{n}C_{\hat{\V}}(|X_k|^r)\leq C_{\hat{\V}}(|Z|^r)$. If $\bE[X_k]=\be[X_k]=0,k\geq 1$, then \eqref{eq7} means $\hat{\V}^*\left(\lim_{n\rightarrow\infty}\frac{S_n}{n^{\frac{1}{r}}\Phi(n)}=0\right)=1$. Morever, if $\{n_i\}_{i\geq 1}=\{2^i\}_{i\geq 1}$and $r=1$, then \eqref{eq7} means $\hat{\V}^*\left(\lim_{n\rightarrow\infty}\frac{S_n}{n}=0\right)=1$, which is a common strong law of large numbers.  $\Phi(n)$ can be estimated by taking some specific $\{n_i\}_{i\geq 1}$ and one can refer to \cite{Gap95} and \cite{Bo98} for details.
	 \end{remark}
	\begin{remark}
		Let $\{X_n\}_{n\geq1}$ be a sequence of i.i.d. random variables in the sublinear expectation space $(\Omega,\mathcal{H},\mbE)$ and $C_{\hat{\V}}(|X_1|^r)<\infty$ for some $r\in [1,2)$. We can take $\{n_i\}_{i\geq 1}=\{2^i\}_{i\geq 1}$ so that $\{X_n\}_{n\geq1}$ is $m$-dependent with respect to $\{2^i\}_{i\geq 1}$. Taking $Z=X_1$, it's easy to verify condition (i) and (ii) because $\{X_n\}_{n\geq1}$ is a sequence of identically distributed random variables. Then by \eqref{th4.2} we can get
			\begin{equation}
			\hat{\V}^*\left(\limsup_{n\rightarrow\infty}\frac{S_n-n\bE[X_1]}{n^{\frac{1}{r}}}>0\ or\ \liminf_{n\rightarrow\infty}\frac{S_n-n\be[X_1]}{n^{\frac{1}{r}}}<0\right)=0,\nonumber
		\end{equation} 
		which is (3.9) of theorem 3.4 in \cite{Zha23}.
	\end{remark}
    \end{theorem}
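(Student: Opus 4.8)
The plan is to treat the two events in \eqref{eq7} separately: writing $\hat{\V}^*(E)\le\hat{\V}^*(E_1)+\hat{\V}^*(E_2)$ by sub-additivity and replacing $\{X_k\}$ by $\{-X_k\}$ (which preserves blockwise $m$-dependence and leaves conditions (i),(ii) intact, with $\be[X_k]=-\bE[-X_k]$), it suffices to show $\hat{\V}^*(E_1)=0$ for $E_1=\{\limsup_n (S_n-\sum_{k\le n}\bE[X_k])/(n^{1/r}\Phi(n))>0\}$. Throughout I would keep every estimate in the form ``bad event $\subseteq$ countable union of $\hat{\V}^*$-null sets'', so that countable sub-additivity of $\hat{\V}^*$ closes the argument; this sidesteps the intersection pathology for capacities flagged before the theorem, since null sets (unlike full sets) are stable under countable unions.

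First I would truncate at level $c_k=k^{1/r}$, setting $Y_k=X_k^{(c_k)}$ and $\tilde{S}_n=\sum_{k=1}^n(Y_k-\mbE[Y_k])$, and decompose $S_n-\sum_{k\le n}\bE[X_k]=\tilde{S}_n+\sum_{k\le n}(X_k-Y_k)+\sum_{k\le n}(\mbE[Y_k]-\bE[X_k])$. For the truncation remainder, grouping $k$ dyadically and using condition (i) together with the comparison $\sum_k 2^k\,\hat{\V}(|Z|>2^{k/r})\le C\,C_{\hat{\V}}(|Z|^r)$ gives $\sum_k\hat{\V}(|X_k|>k^{1/r})<\infty$; by Lemma~\ref{le3.5} only finitely many $X_k\ne Y_k$ off a null set, so $\sum_{k\le n}(X_k-Y_k)$ is eventually constant and its normalized version tends to $0$ there. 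For the deterministic centering term I would bound $|\mbE[Y_k]-\bE[X_k]|\le\int_{c_k}^\infty\hat{\V}(|X_k|>t)\,dt$, interchange sum and integral, and apply condition (i) to obtain $\frac{1}{n^{1/r}}\sum_{k\le n}|\mbE[Y_k]-\bE[X_k]|\le\frac{C}{n^{1/r}}\int_0^{n^{1/r}}t^r\hat{\V}(|Z|>t)\,dt+Cn^{1-1/r}\int_{n^{1/r}}^\infty\hat{\V}(|Z|>t)\,dt$; since $t^{r-1}\hat{\V}(|Z|>t)$ is integrable (its integral is $C_{\hat{\V}}(|Z|^r)/r$), a splitting argument shows both terms vanish as $n\to\infty$. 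I would stress that here one must use this averaged estimate rather than summability of $|\mbE[Y_k]-\bE[X_k]|/k^{1/r}$, which can fail at $r=1$.

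The heart of the proof is $\tilde{S}_n/(n^{1/r}\Phi(n))\to0$ off a null set. Since $\Phi(n)=\Phi_k:=\max_{0\le j\le k}v_j$ is constant on the dyadic block $D_k=[2^k,2^{k+1})$ and $n^{1/r}\ge2^{k/r}$ there, it is enough to prove $\max_{n\in D_k}|\tilde{S}_n|/(2^{k/r}\Phi_k)\to0$. I would split this into the within-block maximum $M_k=\max_{2^k\le n<2^{k+1}}|\sum_{j=2^k}^n(Y_j-\mbE[Y_j])|$ and the dyadic-point sum $\tilde{S}_{2^k}$. For $M_k$, decompose $D_k$ into its $v_k$ sub-blocks $[l_{ki},r_{ki})$, each of which is $m$-dependent (and remains so after the $C_{b,Lip}$ truncation), so $M_k\le\sum_{i\in I_k}W_{ki}$ with $W_{ki}$ the within-sub-block maximum. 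Applying the $m$-dependent maximal inequality Lemma~\ref{le3.3} to each sub-block and using the key relation $\Phi_k\ge v_k$ to absorb the splitting factor gives $\hat{\V}(M_k>\epsilon 2^{k/r}\Phi_k)\le C\epsilon^{-2}2^{-2k/r}\sum_{j\in D_k}\mbE[Y_j^2]$. Bounding $\mbE[Y_j^2]\le 2\int_0^{c_j}t\,\hat{\V}(|X_j|>t)\,dt$ via Lemma~\ref{le3.1}(i), summing over $D_k$ through condition (i), and then summing over $k$ reduces matters to the finiteness of $\sum_k 2^{k(1-2/r)}\int_0^{2^{(k+1)/r}}t\,\hat{\V}(|Z|>t)\,dt$, which after interchanging equals, up to a constant, $\int_0^\infty t^{r-1}\hat{\V}(|Z|>t)\,dt=C_{\hat{\V}}(|Z|^r)/r<\infty$; Lemma~\ref{le3.5} then kills $M_k$. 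For the dyadic points I would use $|\tilde{S}_{2^k}|\le\sum_{l<k}M_l$ and a Toeplitz argument: off the good set $M_l=\epsilon_l 2^{l/r}\Phi_l$ with $\epsilon_l\to0$, and since $\Phi_l\le\Phi_k$ and $\sum_{l<k}2^{l/r}\le C2^{k/r}$, the weighted average $\frac{1}{2^{k/r}}\sum_{l<k}\epsilon_l 2^{l/r}\to0$, giving $\tilde{S}_{2^k}/(2^{k/r}\Phi_k)\to0$.

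The main obstacle is the variance summation linking the blockwise maximal inequality to the $r$-th order Choquet integral: one must verify that the cost $v_k$ incurred by partitioning $D_k$ into $m$-dependent sub-blocks is exactly compensated by the normalization through $\Phi_k\ge v_k$, and that the geometric sum $\sum_k 2^{k(1-2/r)}(\cdots)$ converges — this is precisely where the hypothesis $r<2$ is used, and where the moment computation must be aligned with the dyadic truncation levels $c_j\le 2^{(k+1)/r}$. A secondary but genuine subtlety, which I would flag explicitly, is the discipline of expressing the entire argument as a containment of the exceptional event in a countable union of $\hat{\V}^*$-null sets, never intersecting capacity-one sets.
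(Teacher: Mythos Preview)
Your proposal is correct and follows the same four-part architecture as the paper's proof: truncate at level $k^{1/r}$; kill the truncation tail via $\sum_k\hat{\V}(|X_k|>k^{1/r})<\infty$ and Lemma~\ref{le3.5} (the paper's Step~2); control the within-dyadic-block maximum of the truncated centered sums by applying Lemma~\ref{le3.3} on each $m$-dependent sub-block $[l_{ki},r_{ki})$ and absorbing the splitting cost through $v_k\le\Phi(2^k)$ (the paper's Step~1); and pass from dyadic blocks to all $n$ by a telescope/Toeplitz argument (the paper's Step~4). The variance summation and its reduction to $C_{\hat{\V}}(|Z|^r)<\infty$ are carried out the same way in both.

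The one substantive difference is your treatment of the deterministic centering $\sum_{k\le n}(\mbE[Y_k]-\bE[X_k])$. The paper (Step~3) bounds the weighted series $\sum_k|\bE[X_k]-\bE[\overline{X}_k]|/k^{1/r}$ and appeals to Kronecker; its final estimate ends with the factor $\int_1^\infty m^{-r}\,dm$, which diverges at $r=1$, so as written that step only covers $r\in(1,2)$. You instead bound the Ces\`aro average $n^{-1/r}\sum_{k\le n}|\mbE[Y_k]-\bE[X_k]|$ directly, split the resulting integral at $n^{1/r}$, and use only the integrability of $t\mapsto t^{r-1}\hat{\V}(|Z|>t)$; this works uniformly for all $r\in[1,2)$ and is a genuine improvement over the paper's handling of that step. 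Apart from this, and from minor cosmetic differences (you phrase Step~4 as a Toeplitz average of $M_l/(2^{l/r}\Phi_l)$ rather than fixing $\epsilon$ and letting it run over $1/m$), the two proofs coincide.
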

\par The next result is a strong law of large numbers for orthogonal sequences of random variables.
\begin{theorem}\label{th4.3}
	Let $\{X_n\}_{n\geq 1}$ in $(\Omega,\mathcal{H},\mbE)$ be orthogonal and satisfy $\mbE[X_n]=0$. Set $\sigma^2_n\triangleq\mbE[X^2_n]$, $S_n\triangleq\sum_{k=1}^{n}X_k$. If
	$$\sum_{k=1}^{\infty}\frac{\sigma^2_k}{k^2}(\log_2k)^2<\infty,$$
	then
	\begin{equation}
		\hat{\V}^*\left(\lim_{n\rightarrow\infty}\frac{S_n}{n}=0\right)=1.\label{eq8}
	\end{equation}
	
\end{theorem}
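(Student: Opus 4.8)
The plan is to run the classical dyadic--block argument behind the Rademacher--Menshov strong law, but to organize it so that every convergence statement is extracted from a \emph{single} event of full outer capacity; this is what lets us bypass the intersection pathology described at the beginning of this section. After normalizing by $Y_j \triangleq X_j/\sigma_j$ (we may assume $\sigma_j>0$ for all $j$, since indices with $\sigma_j=0$ do not enter the variance series and, satisfying $\hat{\V}(|X_j|>t)\le \sigma_j^2/t^2=0$ for every $t>0$, are swept into the null event by a further use of Lemma~\ref{le3.5}), the system $\{Y_j\}$ is orthonormal and $\sum_j \sigma_j Y_j=\sum_j X_j$. For each $m$ I would partition the indices into the dyadic block $(2^m,2^{m+1}]$ and study the within--block maximal fluctuation
\[
B_m \triangleq \max_{2^m \le k \le 2^{m+1}} \left| S_k - S_{2^m} \right|.
\]
Applying the Rademacher--Menshov inequality \eqref{eq5} to the orthonormal family $\{Y_j\}_{2^m<j\le 2^{m+1}}$ with coefficients $c_j=\sigma_j$, and noting that the block has $2^m$ terms so that $\log_2(4\cdot 2^m)=m+2$, yields
\[
\mbE[B_m^2] \le (m+2)^2 \sum_{j=2^m+1}^{2^{m+1}} \sigma_j^2.
\]

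Next I would pass to capacities. By the Markov--type bound \eqref{eq1}, applied with $g=B_m^2/(\epsilon^2 4^m)\in\mathcal{H}$ dominating the indicator of $\{B_m>\epsilon 2^m\}$, together with $\hat{\V}^*\le\hat{\V}$,
\[
\hat{\V}^*(B_m>\epsilon 2^m)\le \frac{\mbE[B_m^2]}{\epsilon^2 4^m}\le \frac{(m+2)^2}{\epsilon^2 4^m}\sum_{j=2^m+1}^{2^{m+1}}\sigma_j^2.
\]
The decisive observation is that for $j\in(2^m,2^{m+1}]$ one has $m<\log_2 j$ and $4^m\ge j^2/4$, hence $(m+2)^2 4^{-m}\le C(\log_2 j)^2 j^{-2}$; summing over $m$ therefore collapses the block sums exactly into the hypothesis,
\[
\sum_{m=0}^{\infty}\hat{\V}^*(B_m>\epsilon 2^m)\le \frac{C}{\epsilon^2}\sum_{j=2}^{\infty}\frac{\sigma_j^2(\log_2 j)^2}{j^2}<\infty,
\]
so Lemma~\ref{le3.5} gives $\hat{\V}^*(B_m>\epsilon 2^m\ \text{i.o.})=0$ for every $\epsilon>0$.

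Taking $\epsilon=1/i$ over $i\in\mathbb{N}^*$ and using countable sub--additivity, the set $A\triangleq\bigcup_i\{B_m>2^m/i\ \text{i.o.}\}$ has $\hat{\V}^*(A)=0$, whence $\hat{\V}^*(A^c)=1$ and $B_m/2^m\to 0$ pointwise on the single set $A^c$. Everything remaining is deterministic on $A^c$: since $|S_{2^{m+1}}-S_{2^m}|\le B_m$, the block increments obey $(S_{2^{m+1}}-S_{2^m})/2^m\to 0$, and writing
\[
\frac{S_{2^M}}{2^M}=\frac{X_1}{2^M}+\sum_{m=0}^{M-1}\frac{2^m}{2^M}\cdot\frac{S_{2^{m+1}}-S_{2^m}}{2^m},
\]
a Toeplitz (regular--summation) argument---the weights $2^{m-M}$ are nonnegative, sum to at most $1$, and tend to $0$ for each fixed $m$---forces $S_{2^M}/2^M\to 0$. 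Finally, for $2^m\le n<2^{m+1}$,
\[
\frac{|S_n|}{n}\le \frac{|S_{2^m}|}{2^m}+\frac{|S_n-S_{2^m}|}{2^m}\le \frac{|S_{2^m}|}{2^m}+\frac{B_m}{2^m}\longrightarrow 0,
\]
so $A^c\subseteq\{S_n/n\to 0\}$ and hence $\hat{\V}^*(\lim_{n\to\infty}S_n/n=0)\ge\hat{\V}^*(A^c)=1$, which is \eqref{eq8}.

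I expect the main difficulty to be organizational rather than computational. In the capacity framework two full--capacity events need not intersect to a full--capacity event, so the crucial design choice is that both the dyadic--subsequence limit and the gap--filling must be deduced \emph{deterministically} from the one statement ``$B_m/2^m\to0$ on $A^c$''; this in turn depends on the within--block variances being the only quantity entering the Borel--Cantelli estimate, so that the summation telescopes precisely into $\sum_k \sigma_k^2(\log_2 k)^2/k^2$. The normalization treating $\sigma_j=0$ and the reindexing needed to apply Lemma~\ref{le3.4} to a block not beginning at index $1$ are routine by comparison.
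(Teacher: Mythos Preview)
Your proof is correct and follows essentially the same dyadic-block/Rademacher--Menshov/Borel--Cantelli route as the paper. The only cosmetic differences are that the paper absorbs the $\epsilon$ at the outset via Lemma~\ref{le3.6}(ii) (producing a single null event directly rather than taking a countable union over $\epsilon=1/i$) and carries out the final deterministic step by a direct geometric-series bound in place of your Toeplitz argument.
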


\par Actually, the requirement of orthogonality in Theorem \ref{th4.3} can be weakened to quasi-orthogonality.
\begin{corollary}\label{co4.1}
    Let $\{X_n\}_{n\geq 1}$ in $(\Omega,\mathcal{H},\mbE)$ be quasi-orthogonal instead of orthogonal. Other conditions are the same as in Theorem \ref{th4.3}, then \eqref{eq8} still holds.
\end{corollary}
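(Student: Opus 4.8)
The plan is to observe that the proof of Theorem \ref{th4.3} invokes the orthogonality of $\{X_n\}$ only through the Rademacher--Menshov inequality of Lemma \ref{le3.4}; everything downstream (the dyadic blocking, the maximal bound, the Markov-type estimate \eqref{eq1} and the Borel--Cantelli Lemma \ref{le3.5}) is insensitive to how the second moments of the blocks are controlled. Hence it suffices to prove a quasi-orthogonal version of Lemma \ref{le3.4}, namely that for nonnegative reals $c_1,\dots,c_n$,
\begin{equation}
\mbE\left[\max_{1\le k\le n}\left(\sum_{j=1}^{k}c_jX_j\right)^2\right]\le C_f\,(\log_2 4n)^2\sum_{j=1}^{n}c_j^2\sigma_j^2,\nonumber
\end{equation}
where $\sigma_j^2=\mbE[X_j^2]$ and $C_f\triangleq 1+2\sum_{j=0}^{\infty}f(j)<\infty$. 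Once this is in hand, the argument of Theorem \ref{th4.3} (applied with $c_j\equiv 1$) carries over verbatim, the only change being that the universal constant now also absorbs $C_f$; since $C_f$ is finite and independent of $n$, the summability hypothesis $\sum_k \sigma_k^2(\log_2 k)^2/k^2<\infty$ is left untouched and \eqref{eq8} follows.

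To prove the displayed inequality I would repeat the dyadic decomposition in the proof of Lemma \ref{le3.4} unchanged: writing $\eta_j=\sum_i\psi_{\alpha_i\beta_i}$ as a sum of at most $v$ dyadic blocks and applying the Cauchy inequality gives $\max_{1\le j\le n}\eta_j^2\le v\sum_{\alpha\beta}\psi_{\alpha\beta}^2$, whence $\mbE[\max_j\eta_j^2]\le v\sum_{\alpha\beta}\mbE[\psi_{\alpha\beta}^2]$ by sub-additivity. The single place where orthogonality was used is the bound $\sum_{\alpha\beta}\mbE[\psi_{\alpha\beta}^2]\le(v+1)\sum_j c_j^2$, and this is what must be re-derived under quasi-orthogonality. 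For a fixed block $\psi_{\alpha\beta}=\sum_{j\in J}c_jX_j$ I expand the square and use sub-additivity together with $c_j,c_l\ge 0$ to get
\begin{equation}
\mbE[\psi_{\alpha\beta}^2]\le\sum_{j\in J}c_j^2\sigma_j^2+2\sum_{\substack{j,l\in J\\ j<l}}c_jc_l\,\bigl|\mbE[X_jX_l]\bigr|.\nonumber
\end{equation}
Applying Definition \ref{de1}, the AM--GM inequality $c_jc_l\sigma_j\sigma_l\le\tfrac12(c_j^2\sigma_j^2+c_l^2\sigma_l^2)$, and the summability $\sum_{d\ge 1}f(d)\le\sum_{d\ge 0}f(d)<\infty$ to the double sum, the cross terms are bounded by $2\bigl(\sum_{d\ge 0}f(d)\bigr)\sum_{j\in J}c_j^2\sigma_j^2$, so that $\mbE[\psi_{\alpha\beta}^2]\le C_f\sum_{j\in J}c_j^2\sigma_j^2$. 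Summing over all blocks and using, exactly as before, that each index $j$ belongs to at most $v+1$ blocks, yields $\sum_{\alpha\beta}\mbE[\psi_{\alpha\beta}^2]\le C_f(v+1)\sum_j c_j^2\sigma_j^2$; combining with $\mbE[\max_j\eta_j^2]\le v\sum_{\alpha\beta}\mbE[\psi_{\alpha\beta}^2]$ gives the claim for $n=2^v$, and the general $n$ is handled by padding with zero coefficients as in Lemma \ref{le3.4}.

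The one delicate point---and the main obstacle---is the treatment of the cross terms, because Definition \ref{de1} only controls the upper expectation $\bigl|\mbE[X_jX_l]\bigr|$, whereas expanding $\psi_{\alpha\beta}^2$ and distributing $\mbE$ over terms with $c_jc_l<0$ would produce $\mbE[-X_jX_l]=-\mbe[X_jX_l]$, which quasi-orthogonality does not bound from above. This is precisely why the maximal inequality is stated only for nonnegative coefficients $c_j$; in the application to Theorem \ref{th4.3} one only ever needs $c_j\equiv 1$, so every cross coefficient $c_jc_l$ is nonnegative and the sign difficulty never arises. (Should a two-sided inequality for arbitrary real $c_j$ be wanted, one would instead impose the symmetric condition $\bigl|\mbE[\pm X_jX_l]\bigr|\le\sigma_j\sigma_l f(|j-l|)$, but this is not required here.) Apart from this, the only quantitative effect of quasi-orthogonality is the harmless multiplicative constant $C_f$, which is absorbed into the constant $C$ already present in the proof of Theorem \ref{th4.3}.
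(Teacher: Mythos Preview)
Your proposal is correct and follows essentially the same route as the paper: both reduce the corollary to a quasi-orthogonal Rademacher--Menshov inequality by expanding the square of each dyadic block, invoking Definition \ref{de1} and the AM--GM inequality to bound the cross terms, and absorbing the resulting factor $1+2\sum_j f(j)$ into the constant before rerunning the proof of Theorem \ref{th4.3}. Your explicit remark that the sub-additivity step requires the coefficients $c_j$ to be nonnegative (since quasi-orthogonality controls $\mbE[X_jX_l]$ but not $\mbE[-X_jX_l]$) is a worthwhile clarification that the paper leaves implicit.
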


\section{Proofs}
In this section, we shall give the proofs of the results in Section \ref{sec4}.
\begin{proof}[\bf{Proof of Theorem \ref{th4.1}}]
	By $(ii)$ of Lemma \ref{le3.6}, there exists a sequence $\epsilon_k\searrow0$ such that
$$
	\sum_{n=1}^\infty \frac{\mbE[X_n^2]}{\epsilon_n^2a_n^2}<\infty.
$$
By $(i)$ of Lemma \ref{le3.6}, for all $M>1$, there exists a sequence $n_k\nearrow\infty$ such that $$M a_{n_k}\leq a_{n_{k+1}}\leq M^3 a_{n_k+1}.$$
\par For $n_k+1\leq n\leq n_{k+1}$, $$S_n-S_{n_{k}}=\sum_{j=1}^{m+1}\sum_{l=0}^{L(n,j)}X_{n_k+j+l(m+1)},$$ where $L(n,j)\triangleq [(n-n_k-j)/(m+1)]$.
\par We know that $\{X_{n_k+j+i(m+1)}|i=0,\cdots,L(n,j)\}$ is a sequence of independent and centered random variables for given $j\in \{1,\cdots,m+1\}$ by the $m$-dependence of $\{X_n;n\geq 1\}$. And it's also clear that
$$\max_{n_k+1\leq n\leq n_{k+1}}(S_n-S_{n_k})\leq\sum_{j=1}^{m+1}\max_{0\leq L\leq L(n_{k+1},j)}\sum_{l=0}^{L}X_{n_k+j+l(m+1)}.$$ Hence,
$$\left\{\max_{n_k+1\leq n\leq n_{k+1}}(S_n-S_{n_k})\geq \epsilon_{n_k}a_{n_{k+1}}\right\}\subseteq\bigcup_{j=1}^{m+1}\left\{\max_{0\leq L\leq L(n_{k+1},j)}\sum_{l=0}^{L}X_{n_k+j+l(m+1)}\geq \frac{\epsilon_{n_k}a_{n_{k+1}}}{m+1}\right\}.$$
By \eqref{eq2}, we have
$$\hat{\V}\left(\max_{0\leq L\leq L(n_{k+1},j)}\sum_{l=0}^{L}X_{n_k+j+l(m+1)}\geq \frac{\epsilon_{n_k}a_{n_{k+1}}}{m+1}\right)\leq\frac{(m+1)^2}{\epsilon^2_{n_k}a^2_{n_k+1}}\sum_{l=0}^{L(n_{k+1},j)}\mbE[X^2_{n_k+j+l(m+1)}].$$
Then by the sub-additive of $\hat{\V}$, we have

\begin{align}
	\hat{\V}\left(\max_{n_k+1\leq n\leq n_{k+1}}(S_n-S_{n_k})\geq \epsilon_{n_k}a_{n_{k+1}}\right)
	&\leq\sum_{j=1}^{m+1}\hat{\V}\left(\max_{0\leq L\leq L(n_{k+1},j)}\sum_{l=0}^{L}X_{n_k+j+l(m+1)}\geq \frac{\epsilon_{n_k}a_{n_{k+1}}}{m+1}\right)\notag\\
	&\leq\frac{(m+1)^2}{\epsilon^2_{n_k}a^2_{n_{k+1}}}\sum_{j=1}^{m+1}\sum_{l=0}^{L(n_{k+1},j)}\mbE[X^2_{n_k+j+l(m+1)}]\notag\\
	&=\frac{(m+1)^2}{\epsilon^2_{n_k}a^2_{n_{k+1}}}\sum_{j=n_k+1}^{n_{k+1}}\mbE[X^2_j]\notag\\
    &\leq(m+1)^2\sum_{j=n_k+1}^{n_{k+1}}\frac{\mbE[X^2_j]}{\epsilon^2_ja^2_j}.\notag
\end{align}
So we still have $$\sum_{k=1}^{\infty}\hat{\V}\left(\max_{n_k+1\leq n\leq n_{k+1}}(S_n-S_{n_k})\geq \epsilon_{n_k}a_{n_{k+1}}\right)<\infty,$$
and the rest of the proof is the same as the proof of Theorem 3.1 in \cite{GZ24}.
\end{proof}
\begin{proof}[\bf{Proof of Theorem \ref{th4.2}}]
	Let $\overline{X}_n\triangleq(-n^{\frac{1}{r}})\vee X_n\wedge(n^{\frac{1}{r}})$, $\overline{Z}_n\triangleq(-n^{\frac{1}{r}})\vee Z\wedge(n^{\frac{1}{r}}),n\in \mathbb{N}^*$. We show the proof in four steps.
	\par \textbf{Step 1.} We show for the truncated random variables $\{\overline{X}_n\}_{n\geq1}$,
	\begin{equation}
		\hat{\V}^*\left(\frac{1}{2^{k/r}\Phi(2^k)}\max_{2^k\leq n<2^{k+1}}\sum_{j=2^k}^{n}\overline{X}_j-\bE[\overline{X}_j]>\epsilon,i.o.\right)=0, \forall \epsilon>0.\label{eq9}
	\end{equation}
	\par Set $Y_j\triangleq \overline{X}_j-\bE[\overline{X}_j]$. For any $\epsilon>0$,
	
		$$\sum_{k=1}^{\infty}\hat{\V}\left(\frac{1}{2^{k/r}\Phi(2^k)}\max_{2^k\leq n<2^{k+1}}\sum_{j=2^k}^{n}Y_j>\epsilon\right)$$
	\begin{align}
		&=\sum_{k=1}^{\infty}\hat{\V}\left(\max_{2^k\leq n<2^{k+1}}\sum_{j=2^k}^{n}Y_j>\epsilon\cdot 2^{k/r}\Phi(2^k) \right)\notag\\
		&\leq\sum_{k=1}^{\infty}\hat{\V}\left(\bigcup_{i\in I_k}\left\{\max_{l_{ki}\leq n<r_{ki}}\sum_{j=l_{ki}}^{n}Y_j>\epsilon\cdot \frac{2^{k/r}\Phi(2^k)}{v_k}\right\}\right)\notag\\
		&\leq\sum_{k=1}^{\infty}\sum_{i\in I_k}\hat{\V}\left(\max_{l_{ki}\leq n<r_{ki}}\sum_{j=l_{ki}}^{n}Y_j>\epsilon\cdot \frac{2^{k/r}\Phi(2^k)}{v_k}\right)\notag\\
		&\leq
		\sum_{k=1}^{\infty}\sum_{i\in I_k}C\cdot\frac{v_k^2}{\epsilon^{2}2^{2k/r}\Phi^2(2^k)}\sum_{j=l_{ki}}^{r_{ki}-1}\mbE[\overline{X}^2_j]\notag\\
		&\leq
		C\sum_{k=1}^{\infty}\sum_{i\in I_k}\frac{1}{2^{2k/r}}\sum_{j=l_{ki}}^{r_{ki}-1}\mbE[\overline{X}^2_j]\notag\\
		&= C\cdot \sum_{k=1}^{\infty}\frac{1}{2^{2k/r}}\sum_{j=2^k}^{2^{k+1}-1}\mbE[\overline{X}^2_j].\notag
	\end{align}
The third inequality follows from the sub-additivity of $\hat{\V}$ and the fourth inequality follows from Lemma \ref{le3.3}. The second to last inequality is due to $v^2_k/\Phi^2(2^k)\leq1$. By \eqref{eq3} of Lemma \ref{le3.1}, we have
\begin{align}
	\sum_{j=2^k}^{2^{k+1}-1}\mbE[\overline{X}^2_j]
	&\leq \sum_{j=2^k}^{2^{k+1}-1}C_{\hat{\V}}[\overline{X}^2_j]\notag\\
	&=\sum_{j=2^k}^{2^{k+1}-1}\int_{0}^{+\infty}\hat{\V}(|\overline{X}_j|^2\geq t)dt\notag\\
    &\leq\sum_{j=1}^{2^{k+1}}\int_{0}^{(2^{\frac{k+1}{r}})^2}\hat{\V}(|\overline{X}_j|^2\geq t)dt\notag\\
	&= \sum_{j=1}^{2^{k+1}}\int_{0}^{(2^{\frac{k+1}{r}})^2}\hat{\V}(|X_j|> \sqrt{t})dt\notag\\
	&\leq \int_{0}^{(2^{\frac{k+1}{r}})^2}2^{k+1}\hat{\V}(|Z|> \sqrt{t})dt\notag\\
	&=2^{k+1}C_{\hat{\V}}[|\overline{Z}_{2^{k+1}}|^2].\notag
\end{align}
	Then
	$$\sum_{k=1}^{\infty}\frac{1}{2^{2k/r}}\sum_{j=2^k}^{2^{k+1}-1}\mbE[\overline{X}^2_j]$$
	\begin{align}
		&\leq C\sum_{k=0}^{\infty}\frac{1}{2^{2k/r}}2^{k}C_{\hat{\V}}[|\overline{Z}_{2^{k}}|^2]\notag\\
		&=C\sum_{k=0}^{\infty}2^k\int_{0}^{+\infty}\hat{\V}\left(\left|\frac{\overline{Z}_{2^k}}{2^{k/r}}\right|^2\geq t\right)dt\notag\\
		&=C\sum_{k=0}^{\infty}2^k\int_{0}^{1}\hat{\V}\left(\left|\frac{\overline{Z}_{2^k}}{2^{k/r}}\right|^2\geq t\right)dt\notag\\
		&\leq C\sum_{k=0}^{\infty}2^k\int_{0}^{1}t\cdot\hat{\V}\left(\left|\frac{\overline{Z}_{2^k}}{2^{k/r}}\right|\geq t\right)dt\notag\\
		&=C\int_{0}^{1}t\cdot\sum_{k=0}^{\infty}2^k\hat{\V}(|Z|\geq2^{k/r}t)dt\notag\\
		&=C\int_{0}^{1}t\cdot\sum_{k=0}^{\infty}2^k\hat{\V}\left(\left|\frac{Z}{t}\right|^r\geq2^{k}\right)dt\notag\\
		&\leq C\int_{0}^{1}t\cdot C_{\hat{\V}}\left[\left|\frac{Z}{t}\right|^r \right]dt\notag\\
		&=C\cdot C_{\hat{\V}}[|Z|^r]\cdot \int_{0}^{1}\frac{1}{t^{r-1}}dt<\infty\notag.
	\end{align}
Therefore, 
\begin{equation}
	\sum_{k=1}^{\infty}\hat{\V}^*\left(\frac{1}{2^{k/r}\Phi(2^k)}\max_{2^k\leq n<2^{k+1}}\sum_{j=2^k}^{n}Y_j>\epsilon\right)\leq\sum_{k=1}^{\infty}\hat{\V}\left(\frac{1}{2^{k/r}\Phi(2^k)}\max_{2^k\leq n<2^{k+1}}\sum_{j=2^k}^{n}Y_j>\epsilon\right)<\infty, \forall\epsilon>0.\label{eq10}
\end{equation} By Borel-Cantelli lemma \ref{le3.5}, \eqref{eq10} yields \eqref{eq9}. 
\par \textbf{Step 2.} We show 
\begin{equation}
\hat{\V}^*(X_k\neq\overline{X}_k,i.o.)=0.\label{eq11}
\end{equation}
\par By Lemma \ref{le3.5}, we only need to prove $\sum_{k=1}^{\infty}\hat{\V}(X_k\neq\overline{X}_k)<\infty$. 
$$\sum_{k=1}^{\infty}\hat{\V}(X_k\neq\overline{X}_k)$$
\begin{align}
	&=\sum_{k=1}^{\infty}\hat{\V}(|X_k|>k^{\frac{1}{r}})\notag\\
	&\leq\sum_{k=0}^{\infty}\sum_{n=2^k}^{2^{k+1}-1}\hat{\V}(|X_n|>2^{k/r})\notag\\
	&\leq C\sum_{k=0}^{\infty}2^k\hat{\V}(|Z|>2^{k/r})\notag\\
	&\leq C\cdot C_{\hat{\V}}[|Z|^r]<\infty.\notag
\end{align}
Hence, \eqref{eq11} holds.
\par \textbf{Step 3.} We show
\begin{equation}
	\lim_{n\rightarrow\infty}\frac{\sum_{k=1}^{n}\bE[X_k]-\bE[\overline{X}_k]}{n^{\frac{1}{r}}}=0.\label{eq12}
\end{equation}
\par For $1\leq r<2,$
\begin{align}
	\left|\sum_{k=1}^{n}\frac{\bE[X_k]-\bE[\overline{X}_k]}{k^\frac{1}{r}}\right|
    &\leq\sum_{k=1}^{n}\frac{\bE[(|X_k|-k^{\frac{1}{r}})^+]}{k^{\frac{1}{r}}}\notag\\
    &\leq\sum_{k=1}^{\infty}\frac{C_{\hat{\V}}[(|X_k|-k^{\frac{1}{r}})^+]}{k^{\frac{1}{r}}}\notag\\
    &\leq\sum_{k=0}^{\infty}\sum_{n=2^k}^{2^{k+1}-1}\frac{C_{\hat{\V}}[(|X_n|-2^{\frac{k}{r}})^+]}{2^{\frac{k}{r}}}\notag\\
    &=\sum_{k=0}^{\infty}\sum_{n=2^k}^{2^{k+1}-1}\frac{\int_{2^{k/r}}^{+\infty}\hat{\V}(|X_n|>t)dt}{2^{\frac{k}{r}}}\notag\\ 
    &\leq C\sum_{k=0}^{\infty}2^k\cdot\frac{\int_{2^{k/r}}^{+\infty}\hat{\V}(|Z|>t)dt}{2^{\frac{k}{r}}}\notag\\ 
    &=C\sum_{k=0}^{\infty}2^k\cdot\int_{1}^{+\infty}\hat{\V}(|Z|>2^{k/r}m)dm\notag\\
    &=C\int_{1}^{+\infty}\sum_{k=0}^{\infty}2^k\cdot\hat{\V}\left(\left|\frac{Z}{m}\right|^r>2^k\right)dm\notag\\
    &\leq C\int_{1}^{+\infty}C_{\hat{\V}}\left(\left|\frac{Z}{m}\right|^r\right)dm\notag\\
    &=C\cdot C_{\hat{\V}}[|Z|^r]\cdot \int_{1}^{+\infty}\frac{1}{m^r}\ dm<\infty\notag.
\end{align}

Therefore, according to the famous Kronecker lemma, \eqref{eq12} holds.
\par \textbf{Step 4.} We show
\begin{equation}
	\hat{\V}^*\left(\limsup_{n\rightarrow\infty}\frac{S_n-\sum_{k=1}^{n}\bE[X_k]}{n^{1/r}\Phi(n)}>\epsilon\right)=0, \forall\epsilon>0.\label{eq13}
\end{equation}
\par Let $2^k\leq n<2^{k+1}$, $\epsilon>0$. For every $\omega\in \left\{\frac{1}{2^{k/r}\Phi(2^k)}\max_{2^k\leq n<2^{k+1}}\sum_{j=2^k}^{n}\overline{X}_j-\bE[\overline{X}_j]>\epsilon,i.o.\right\}^c,$ there exists a positive integer $k_0(\omega)$ such that for every $k\geq 2^{k_0(\omega)}$, 
\begin{equation}
	\frac{1}{2^{k/r}\Phi(2^k)}\max_{2^k\leq n<2^{k+1}}\sum_{j=2^k}^{n}Y_j\leq\epsilon,\label{eq15}
\end{equation}
where $Y_j\triangleq \overline{X}_j-\bE[\overline{X}_j].$
\par On $\left\{\frac{1}{2^{k/r}\Phi(2^k)}\max_{2^k\leq n<2^{k+1}}\sum_{j=2^k}^{n}\overline{X}_j-\bE[\overline{X}_j]>\epsilon,i.o.\right\}^c$, we have
\begin{align}
	\frac{\sum_{j=1}^{n}Y_j}{n^{1/r}\Phi(n)}
	&\leq \frac{\sum_{j=1}^{2^k-1}Y_j}{n^{1/r}\Phi(n)}+\frac{\max_{2^k\leq i<2^{k+1}}\sum_{j=2^k}^{i}Y_j}{n^{1/r}\Phi(n)}\notag\\
	&=\frac{\sum_{j=1}^{2^{k_0}-1}Y_j}{n^{1/r}\Phi(n)}+\frac{\sum_{m=k_0}^{k-1}\sum_{j=2^m}^{2^{m+1}-1}Y_j}{n^{1/r}\Phi(n)}+\frac{2^{k/r}\Phi(2^k)}{n^{1/r}\Phi(n)}\cdot \frac{\max_{2^k\leq i<2^{k+1}}\sum_{j=2^k}^{i}Y_j}{2^{k/r}\Phi(2^k)}.\label{eq14}
\end{align}
The first term in \eqref{eq14} converges to zero when $n\rightarrow\infty$. By $\frac{2^{k/r}\Phi(2^k)}{n^{1/r}\Phi(n)}\leq 1$ and \eqref{eq15}, we have $\limsup_{n\rightarrow\infty}\frac{2^{k/r}\Phi(2^k)}{n^{1/r}\Phi(n)}\cdot \frac{\max_{2^k\leq i<2^{k+1}}\sum_{j=2^k}^{i}Y_j}{2^{k/r}\Phi(2^k)}\leq \epsilon$ for the third term in \eqref{eq14}. As for the middle term,
\begin{align}
	\frac{\sum_{m=k_0}^{k-1}\sum_{j=2^m}^{2^{m+1}-1}Y_j}{n^{1/r}\Phi(n)}
	&\leq\sum_{m=k_0}^{k-1}\frac{2^{m/r}\Phi(2^m)}{n^{1/r}\Phi(n)}\cdot\frac{\max_{2^m\leq n<2^{m+1}}\sum_{j=2^m}^{n}Y_j}{2^{m/r}\Phi(2^m)}\notag\\
	&\leq\epsilon\cdot \sum_{m=k_0}^{k-1}\frac{2^{m/r}\Phi(2^m)}{n^{1/r}\Phi(n)}\notag\\
	&\leq\epsilon\cdot\frac{\sum_{m=k_0}^{k-1}2^{m/r}}{n^{1/r}}\notag\\
	&\leq\epsilon\cdot\frac{\frac{1}{ln(2^{1/r})}\cdot2^{k/r}}{n^{1/r}}\notag\\
	&\leq 3\epsilon,
\end{align}
where the fourth inequality follows from the truth that for $\alpha>1, \sum_{m=0}^{k-1}\alpha^m\leq \frac{\alpha^k}{ln\alpha}.$ Hence, we have $\limsup_{n\rightarrow\infty}\frac{\sum_{k=1}^{n}\overline{X}_k-\bE[\overline{X}_k]}{n^{1/r}\Phi(n)}\leq4\epsilon$ on $\left\{\frac{1}{2^{k/r}\Phi(2^k)}\max_{2^k\leq n<2^{k+1}}\sum_{j=2^k}^{n}Y_j>\epsilon,i.o.\right\}^c$, which implies
\begin{equation}
	\hat{\V}^*\left(\limsup_{n\rightarrow\infty}\frac{\sum_{k=1}^{n}\overline{X}_k-\bE[\overline{X}_k]}{n^{1/r}\Phi(n)}>4\epsilon\right)\leq\hat{\V}^*\left(\frac{1}{2^{k/r}\Phi(2^k)}\max_{2^k\leq n<2^{k+1}}\sum_{j=2^k}^{n}Y_j>\epsilon,i.o.\right)=0, \forall \epsilon>0.\label{eq16}
\end{equation} 

\par It's clear that
$$\frac{S_n-\sum_{k=1}^{n}\bE[X_k]}{n^{1/r}\Phi(n)}=\frac{\sum_{k=1}^{n}X_k-\overline{X}_k}{n^{1/r}\Phi(n)}+
\frac{\sum_{k=1}^{n}\overline{X}_k-\bE[\overline{X}_k]}{n^{1/r}\Phi(n)}+
\frac{\sum_{k=1}^{n}\bE[\overline{X}_k]-\bE[X_k]}{n^{1/r}\Phi(n)},$$
which implies 
$$\left\{\limsup_{n\rightarrow\infty}\frac{S_n-\sum_{k=1}^{n}\bE[X_k]}{n^{1/r}\Phi(n)}>\epsilon\right\}\subseteq\{X_k\neq\overline{X}_k,i.o.\}\bigcup\left\{\limsup_{n\rightarrow\infty}\frac{\sum_{k=1}^{n}\overline{X}_k-\bE[\overline{X}_k]}{n^{1/r}\Phi(n)}>\epsilon\right\},\forall\epsilon>0.$$
Therefore, by the sub-additivity of $\hat{\V}^*$, \eqref{eq11} and \eqref{eq16} we have \eqref{eq13}.
\par So far, we have obtained \eqref{eq9}, \eqref{eq11}, \eqref{eq12}, \eqref{eq13}. 
\par We notice that $$\hat{\V}^*\left(\limsup_{n\rightarrow\infty}\frac{S_n-\sum_{k=1}^{n}\bE[X_k]}{n^{1/r}\Phi(n)}>0\right)=\hat{\V}^*\left(\bigcup_{m=1}^{\infty}\left\{\limsup_{n\rightarrow\infty}\frac{S_n-\sum_{k=1}^{n}\bE[X_k]}{n^{1/r}\Phi(n)}>\frac{1}{m}\right\}\right).$$
Hence, by the countable sub-additivity of $\hat{\V}^*$ and \eqref{eq13} we have $$\hat{\V}^*\left(\limsup_{n\rightarrow\infty}\frac{S_n-\sum_{k=1}^{n}\bE[X_k]}{n^{1/r}\Phi(n)}>0\right)=0.$$
Taking $X_k$ by $-X_k$, we have
$$\hat{\V}^*\left(\liminf_{n\rightarrow\infty}\frac{S_n-\sum_{k=1}^{n}\be[X_k]}{n^{1/r}\Phi(n)}<0\right)=0.$$
Finally, by the sub-additivity of $\hat{\V}^*$, \eqref{eq7} is proved.
\end{proof}
Next, we show the proof of Theorem \ref{th4.3}.
\begin{proof}[\bf{Proof of Theorem \ref{th4.3}}]
	\par 	By $(ii)$ of Lemma \ref{le3.6}, there exists a sequence $\epsilon_k\searrow0$ such that
	$$
	\sum_{k=1}^{\infty} \frac{\sigma^2_k}{\epsilon_k^2k^2}(\log_2k)^2<\infty.
	$$
	Then
	\begin{align}
	\hat{\V}^*\left(\max_{2^k+1\leq n\leq2^{k+1}}\left|S_n-S_{2^k}\right|\geq\epsilon_{2^k}\cdot 2^{k+1} \right)
	&\leq\frac{\mbE[\left(\max_{2^k+1\leq n\leq2^{k+1}}|S_n-S_{2^k}|\right)^2]}{\epsilon^2_{2^k}(2^{k+1})^2}\notag\\
	&\leq (\log_22^{k+2})^2\frac{\sum_{j=2^k+1}^{2^{k+1}}\mbE[X^2_j]}{\epsilon^2_{2^k}(2^{k+1})^2}\notag\\
	&\leq(\log_22^{k+2})^2\sum_{j=2^k+1}^{2^{k+1}}\frac{\mbE[X^2_j]}{\epsilon^2_{j}j^2}\notag\\
	&\leq\sum_{j=2^k+1}^{2^{k+1}}\frac{\mbE[X^2_j]}{\epsilon^2_{j}j^2}(2+\log_2j)^2,\notag
	\end{align}
where we use the Rademacher-Mensov inequality \eqref{eq5} in the second inequality. Hence, $$\sum_{k=1}^{\infty}\hat{\V}^*\left(\max_{2^k+1\leq n\leq2^{k+1}}\left|S_n-S_{2^k}\right|\geq\epsilon_{2^k}\cdot 2^{k+1} \right)\leq\sum_{j=1}^{\infty}\frac{\sigma^2_j}{\epsilon_j^2j^2}(2+\log_2j)^2<\infty,$$
	and by lemma \ref{le3.5} we have $\hat{\V}^*\left(\max_{2^k+1\leq n\leq2^{k+1}}\left|S_n-S_{2^k}\right|\geq\epsilon_{2^k}\cdot 2^{k+1},i.o.\right)=0.$
	\par For every $\omega\in\left\{\max_{2^k+1\leq n\leq2^{k+1}}\left|S_n-S_{2^k}\right|\geq\epsilon_{2^k}\cdot 2^{k+1},i.o.\right\}^c$, there exists a positive integer $K_0(\omega)$ such that for every $k\geq K_0(\omega)$, $\max_{2^k+1\leq n\leq2^{k+1}}\left|S_n-S_{2^k}\right|<\epsilon_{2^k}\cdot 2^{k+1}$. Let $n>2^{K_0(\omega)+1}$, $2^k+1\leq n\leq 2^{k+1}$, $K_0(\omega)\leq k_0< k$. On $\left\{\max_{2^k+1\leq n\leq2^{k+1}}\left|S_n-S_{2^k}\right|\geq\epsilon_{2^k}\cdot 2^{k+1},i.o.\right\}^c$, we have
	\begin{align}
		|S_n|
		&\leq|S_n-S_{2^k}|+\sum_{i=k_0+1}^{k}|S_{2^i}-S_{2^{i-1}}|+|S_{2^{k_0}}|\notag\\
		&\leq\epsilon_{2^k}\cdot2^{k+1}+\sum_{i=k_0+1}^{k}\epsilon_{2^{i-1}}2^i+|S_{2^{k_0}}|\notag\\
		&\leq\epsilon_{2^{k_0}}\cdot2^{k+1}(1+\frac{1}{2}+\cdots+\frac{1}{2^{k-k_0}})+|S_{2^{k_0}}|\notag\\
		&\leq\epsilon_{2^{k_0}}\cdot2^{k+1}\cdot 2+|S_{2^{k_0}}|.\notag
	\end{align}
	Then
	\begin{align}
		\frac{|S_n|}{n}
		&\leq\epsilon_{2^{k_0}}\cdot\frac{2^{k+1}}{n}\cdot 2+\frac{|S_{2^{k_0}}|}{n}\notag\\
		&\leq\epsilon_{2^{k_0}}\cdot\frac{2^{k+1}}{2^k}\cdot 2+\frac{|S_{2^{k_0}}|}{n}\notag\\
		&=4\epsilon_{2^{k_0}}+\frac{|S_{2^{k_0}}|}{n}.\notag
	\end{align}
	So $$\limsup_{n\rightarrow\infty}\frac{|S_n|}{n}\leq4\epsilon_{2^{k_0}},$$\\
	then letting $k_0\rightarrow\infty$ we have $\lim_{n\rightarrow\infty}\frac{S_n}{n}=0$, which means $$\left\{\max_{2^k+1\leq n\leq2^{k+1}}\left|S_n-S_{2^k}\right|\geq\epsilon_{2^k}\cdot 2^{k+1},i.o.\right\}^c\subseteq\left\{\lim_{n\rightarrow\infty}\frac{S_n}{n}=0\right\}.$$ \\
	Therefore, 
	$$1=\hat{\V}^*\left(\left\{\max_{2^k+1\leq n\leq2^{k+1}}\left|S_n-S_{2^k}\right|\geq\epsilon_{2^k}\cdot 2^{k+1},i.o.\right\}^c\right)\leq\hat{\V}^*\left(\lim_{n\rightarrow\infty}\frac{S_n}{n}=0\right).$$
	\eqref{eq8} is proved.
	\end{proof}
	\begin{proof}[\bf{Proof of Corollary \ref{co4.1}}]
		We just need to show that Rademacher-Mensov type inequality is still valid for the quasi-orthogonal sequence of random variables $\{X_n\}_{n\geq1}$, and the rest of the proof is the same as that of Theorem \ref{th4.3}. By the definition of quasi-orthogonality (Definiton \ref{de1}) we know that there exists a nonnegative sequence $\{f(j):j=0,1,\cdots\}$ and $\sum_{j=0}^{\infty}f(j)<\infty$ such that $|\mbE[X_kX_l]|\leq\sigma_k\sigma_lf(|k-l|)$,$\forall k,l=1,2,\cdots.$ So,
		\begin{align}
			\mbE\left[\left(\sum_{k=1}^{n}X_k\right)^2\right]
			&=\mbE\left[\sum_{k=1}^{n}X^2_k+2\sum_{j=1}^{n-1}\sum_{k=1}^{n-j}X_kX_{k+j}\right]\notag\\
			&\leq\sum_{k=1}^{n}\mbE[X^2_k]+2\sum_{j=1}^{n-1}\sum_{k=1}^{n-j}\mbE[X_kX_{k+j}]\notag\\
			&\leq\sum_{k=1}^{n}\sigma^2_k+2\sum_{j=1}^{n-1}\sum_{k=1}^{n-j}\sigma_k\sigma_{k+j}f(j)\notag\\
			&\leq\sum_{k=1}^{n}\sigma^2_k+\sum_{j=1}^{n-1}f(j)\sum_{k=1}^{n-j}(\sigma^2_k+\sigma^2_{k+j})\notag\\
			&\leq\left(1+2\sum_{j=1}^{\infty}f(j)\right)\sum_{k=1}^{n}\sigma^2_k.\notag
		\end{align}
		Then manipulating the proof of Lemma \ref{le3.4} again, we have 
		$$\mbE[\psi^2_{\alpha\beta}]\leq\left(1+2\sum_{j=1}^{\infty}f(j)\right)\sum_{j=\alpha}^{\beta}c^2_j,$$
		and finally, we have $$\mbE\left[\max_{1\leq j\leq N}\eta^2_j\right]\leq \left(1+2\sum_{j=1}^{\infty}f(j)\right)(\log_{2}4n)^2\sum_{j=1}^{n}c^2_j.$$
		The proof is completed.
		\end{proof}

\bibliographystyle{plain}

\end{document}